\documentclass[reqno]{amsart}
\usepackage{amsaddr}
\usepackage[english]{babel}
\usepackage[latin1]{inputenc}
\usepackage{amsfonts}
\usepackage{amssymb}
\usepackage{amsmath}
\usepackage{amsthm}
\usepackage{graphicx}
\usepackage{epsfig}
\usepackage{fancyhdr}
\usepackage[all]{xy}
\usepackage{amsmath, amsthm, amsfonts, amssymb, mathrsfs,mathtools,faktor}
\usepackage{graphicx}
\usepackage{enumerate}
\usepackage{hyperref}
\usepackage{amsmath,amssymb}
\usepackage[bbgreekl]{mathbbol}
\numberwithin{equation}{section}

\newcommand{\C}{\mathbb{C}}
\newcommand{\R}{\mathbb{R}}
\newcommand{\N}{\mathbb{N}}
\newcommand{\Z}{\mathbb{Z}}

\newtheorem{theorem}{Theorem}[section]
\newtheorem{lemma}{Lemma}[section]

\newtheorem{obs}{Observation}[section]

\newtheorem{prop}{Proposition}[section]
\newtheorem{defi}{Definition}[section]

\newcommand{\ip}[2]
      {\ensuremath{\langle #1,#2 \rangle}}

\begin{document}
	
\title{ Remling's Theorem for vector-valued discrete Schr\"odinger operators } 
 \author[Acharya]{Keshav Raj Acharya}

\address{Department of Mathematics\\
Embry-Riddle Aeronautical University\\  
1 Aerospace Blvd., Daytona Beach, FL 32114, U.S.A. \\ E-mail address:acharyak@erau.edu }

\begin{abstract} This paper extends Remling's Theorem to vector-valued discrete Schr\"odinger operators, showing that the $\omega$ limit points of the matrix potentials, under the shift map, are reflectionless on the absolutely continuous spectrum with full multiplicity.\\

\noindent \textbf{ Keywords:} {Discrete Schr\"odinger operators, absolutely continuous spectrum, $\omega$ limit set, reflectionless matrix potential}.

\end{abstract}
\maketitle

\section{Introduction}

We consider discrete Schr\"odinger equations with matrix-valued potentials of the form
\begin{align}\label{ds}
y(n+1)+y(n-1)+B(n)y(n)=zy(n),\quad z\in \C 
\end{align}
where \( y : I \to \mathbb{C}^d \) is a vector-valued sequence defined on an index set \( I \subseteq \mathbb{Z} \), typically \( \mathbb{Z} \) (the full lattice) or \( \mathbb{N} \) (the half-line). Each $ y: I \rightarrow \C^d: y(n) =(y_1(n)\quad y_2(n), \hdots\quad y_d(n))^{ \top } $ ( $ \top $ stands for a transpose)  is a vector-valued sequence, and \( B(n) \in \mathbb{C}^{d \times d} \) is a Hermitian matrix referred to as the matrix-valued potential.\\
Solutions to equation \eqref{ds} are obtained by prescribing the values of the sequence at two consecutive sites. That is, for any \( c_1, c_2 \in \mathbb{C}^d \), one defines \( y(n) = c_1 \), \( y(n+1) = c_2 \), and the rest of the sequence is determined recursively via \eqref{ds}. Since each vector has \( d \) components, the total dimension of the solution space is \( 2d \).

A matrix-valued sequence \( F(n) \in \mathbb{C}^{d \times 2d} \) is called a \emph{matrix solution} of equation \eqref{ds} if each column of \( F(n) \) satisfies \eqref{ds}. Equivalently, \( F(n) \) satisfies the recurrence relation component-wise. Equation \eqref{ds} induces a linear operator on the Hilbert space \( \ell^2(I, \mathbb{C}^d) \), consisting of square-summable vector-valued sequences. The inner product on this space is defined by
\[
\langle u, v \rangle = \sum_{n \in I} u(n)^* v(n),
\]
where \( u(n)^* \) denotes the Hermitian transpose (conjugate transpose) of \( u(n) \). Define an operator \( J \) on \( \ell^2(I, \mathbb{C}^d) \) by
\begin{equation}\label{jo}
	(Jy)(n)= y(n+1)+y(n-1)+B(n)y(n) 
	. \end{equation}	 
When \( I \) is semi-infinite, such as \( I = \mathbb{N} \), one must modify the definition at the boundary. For instance, set
\[
(Jy)(1) = y(2) + B(1) y(1), \]
by imposing \( y(0) = 0 \) as the boundary condition at 0. Assuming that the potential \( B(n) \) is uniformly bounded and Hermitian for all \( n \), the operator \( J \) is self-adjoint. By the spectral theorem, its spectrum is a closed subset of \( \mathbb{R} \) and can be decomposed into absolutely continuous, singular continuous, and pure point parts.

This matrix-valued generalization is particularly significant in the analysis of physical systems with internal degrees of freedom such as coupled waveguides, spin chains, or multichannel quantum models where scalar equations fail to capture the full spectral complexity.
 
Remling's theorem, originally formulated for one-dimensional scalar Schr\"odinger and Jacobi operators \cite{CR1, CR2}, describes the asymptotic structure of potentials in terms of their spectral properties. Specifically, it asserts that all \(\omega\)-limit points of a potential sequence under the shift map are reflectionless on the essential support of the absolutely continuous spectrum.

In this paper, we extend Remling's theorem to vector-valued discrete Schr\"odinger operators. We consider a bounded, real, symmetric  matrix-valued potentials \( B(n) \) on \( \mathbb{N} \), and show that the corresponding operator \( J \) inherits an analogous reflectionless property: every \(\omega\)-limit point of the sequence \( B(n) \) under the shift map gives rise to a reflectionless operator on the absolutely continuous spectrum, with full multiplicity. Similar extensions in the context of canonical systems and CMV matrices can be bound in \cite{KA1,db}.   
This generalization not only broadens the scope of Remling's original result, but also highlights the stability of the reflectionless property in higher-dimensional and matrix-valued settings. It confirms that the absolutely continuous spectrum retains strong structural features even in the presence of internal degrees of freedom.

\section{Titchmarsh-Weyl $m$ function}

The Titchmarsh-Weyl theory provides elegant methods to study the spectral measure and the corresponding spectrum of the associated operator $J$. Some of the results presented here can also be found in \cite{KA2, KM}, though we present them here to make the paper self-contained.  In \cite{KA2}, it is shown that for any bounded Hermitian matrix-valued potential $B(n)$  and for any $z\in \C\setminus\R$, there is a precise number $d$ of linearly independent solutions of \eqref{ds} that lie in space $\ell^2(\N, \C^d)$. Having these solutions in columns, we can construct a matrix solution $F(n)$ that lies in the space $\ell^2(\N, \C^{d\times d} )$, the space of all  $d\times d $  square-summable matrix-valued sequences $U(n)$.
That is, \[
\ell^2(\N, \C^{d\times d} ) = \Big \{ U(n) : \sum_{n=1}^{\infty}\|U(n)^* U(n)\| < \infty  \Big \}.
\]
The Wronskian of any two vector-valued sequences $\{x(n)\}$ and $\{y(n)\}$, is defined by
\begin{equation}
W_n(x(n), y(n)) = x(n+1)^\top y(n) - x(n)^\top y(n+1).
\end{equation} This definition is generalized to the $d \times d$ matrix-valued sequences $\{F(n)\}$ and $\{G(n)\}$ as 
\begin{equation}
W_n(F(n), G(n)) =   F(n+1)^\top G(n) - F(n)^\top G(n+1).
\end{equation}
 In \cite{KM}, it is shown that if $\{F(n)\}$ and $\{G(n)\}$ are matrix solutions to \eqref{ds} then the Wronskian $W_n(F(n), G(n))$ is independent of $n.$\\
Define the difference expression
\[
\tau(u(n)) = u(n + 1) + u(n - 1) + B(n)u(n), 
\]  
which resembles the operator \( J \) but acts on general vector-valued sequences.
\medskip

\begin{lemma}[Green's Identity] Let $\mathbb{N}_0 = \mathbb{N} \cup \{0\}$. For $F(n, z), G(n, z) \in \ell^2(\mathbb{N}_0, \mathbb{C}^{d\times d} )$, the following identity holds:
\[
\sum_{j=1}^{n} \left[ F(j, z)^* \, (\tau G)(j, z) - (\tau F)(j, z)^* \, G(j, z) \right]
= W_0(\overline{F}, G) - W_n(\overline{F}, G).
\]
\end{lemma}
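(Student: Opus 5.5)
The plan is a direct discrete summation by parts: the potential terms cancel pointwise because $B(j)$ is Hermitian, and the remaining shift terms telescope. Square-summability plays no role. The identity is purely algebraic for any matrix-valued sequences defined on $\mathbb{N}_0$, and $\mathbb{N}_0$ is needed only so that $(\tau G)(1)$ and $(\tau F)(1)$, which involve the values at $0$, make sense. I will also note once that $\overline{F}(n)^\top = F(n)^*$. This gives
\[
W_n(\overline{F},G) = F(n+1)^* G(n) - F(n)^* G(n+1),
\]
so both sides of the claimed identity can be written in terms of conjugate transposes only.

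\textbf{Step 1 (potential terms).} Expand $(\tau G)(j) = G(j+1)+G(j-1)+B(j)G(j)$, and similarly for $F$, using $(B(j)F(j))^* = F(j)^* B(j)^*$. Since $B(j)^* = B(j)$, the contributions
\[
F(j)^*B(j)G(j) - F(j)^*B(j)^*G(j)
\]
cancel. The $j$-th summand therefore reduces to
\[
F(j)^*G(j+1) + F(j)^*G(j-1) - F(j+1)^*G(j) - F(j-1)^*G(j).
\]

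\textbf{Step 2 (telescoping).} Set $D_j = F(j)^*G(j+1) - F(j+1)^*G(j) = -W_j(\overline{F},G)$ for $j\ge 0$. Expanding $D_j - D_{j-1}$ reproduces exactly the four terms of the reduced summand from Step 1. Summing from $j=1$ to $n$ then gives $D_n - D_0$, which equals $W_0(\overline{F},G) - W_n(\overline{F},G)$.

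The main obstacle is bookkeeping rather than any real difficulty. One must keep the conjugate-transpose conventions consistent: the paper's Wronskian uses $\top$, so it must be applied to $\overline{F}$ to turn it into $F^*$. One must also check the signs in the telescoping so that the boundary terms come out as $W_0 - W_n$ and not the reverse. I would verify the $j=1$ term explicitly, since that is where the value at $0$ enters.
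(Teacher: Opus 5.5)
Your proposal is correct and follows the same route as the paper. The potential terms cancel by Hermiticity of $B(j)$, and the remaining shift terms telescope to $D_n - D_0$, which is exactly the paper's boundary expression $F(1)^*G(0)-F(0)^*G(1)+F(n)^*G(n+1)-F(n+1)^*G(n)=W_0(\overline{F},G)-W_n(\overline{F},G)$; your remark that square-summability plays no role is also accurate.
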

\begin{proof}   By using the definition of $\tau$ we expand:
\begin{align*}
F(j)^* (\tau G)(j) - (\tau F)(j)^* G(j) 
&= \left[ F(j)^* G(j+1) - F(j+1)^* G(j) \right] \\
&\quad + \left[ F(j)^* G(j-1) - F(j-1)^* G(j) \right] \\
&\quad + F(j)^* (B(j) - B(j)^*) G(j).
\end{align*} 
The last term  on the right-hand side is  zero since $B(j)$ is Hermitian. Summing from $j=1$ to $n$ on both sides we get,
\begin{align*}
\sum_{j=1}^{n}  [ F(j, z)^* \, & (\tau G)(j, z) -  (\tau F)(j, z)^* \, G(j, z)  ] \\ & = F(1)^*G(0) -F(0)^*G(1) +F(n)^*G(n+1) -F(n+1)^*G(n) )
 \\ & = W_0(\overline{F}, G) - W_n(\overline{F}, G).
\end{align*}
\end{proof}
As mentioned above the solution space of \eqref{ds} is a $2d$-dimensional vector space. Suppose a basis for the space  is of the form  $ \{ u_1,  u_2,  \dots, u_d, v_1, v_2, \dots, v_d \} $. Using this basis, we form matrix solutions, that is, for $z\in \C,$ let 
\begin{align}\label{is} 
U(n,z) = (u_1(n,z), u_2(n,z), \hdots, u_d(n,z)), \ \  V(n,z) = (v_1(n,z), v_2(n,z), \hdots, v_d(n,z)) .
\end{align} 
In addition, we prescribe the initial conditions
\begin{equation}
\label{ic} U(0,z)= -I, \hspace{.3in} V(0,z)= 0, \hspace{.2in}U(1,z) =0, \hspace{.3in} V(1,z)= I .
\end{equation}

We may choose these conditions at different locations. More specifically, when we define the Titchmarsh-Weyl $m$ functions on the half line, these conditions are assigned at any point $n.$ By iterating the difference equation \eqref{ds}, we see that for fixed $n\in \N,\  U(n,z),  V(n,z)$ are polynomials in $z$ of degree $n-2$ over $ \C^{d\times d}.$ In addition, if $ B(n)$ is real, the complex conjugates of the matrix solutions for   spectral parameters $z$ are same as the matrix solutions for the complex conjugates parameters $\bar{z}.$ That is, 
$$
\overline{U(n,z)} = U(n,\bar{z})\ , \quad \overline{V(n,z)}=V(n,\bar{z}).$$

The Titchmarsh-Weyl $m$ function for the vector-valued discrete Schr\"odinger operators associated to the equation (\ref{ds}) is defined in terms of solutions as follows. 

\begin{defi} Let $z\in \C^+= \{z\in \C : \operatorname{Im}(z) >0\}.$ The Titchmarsh-Weyl $m$ function is defined as the unique complex matrix $ M(z) \in \C^{d\times d}$ such that
\begin{align} \label{wm}
F(n,z) = U(n,z)+ M(z)V(n,z) \in\ell^2(\N, \C^{d\times d} ),
\end{align} where $U(n,z), V(n,z) $ are matrix solutions with initial values (\ref{ic}).\end{defi}

 Note that the matrix  solution $  F(n,z)$ is a set of $d$ linearly independent vector solutions of  (\ref{ds}) that are in $l^2(\N, \C^d).$ This definition is  well defined. As we mentioned above that there are only $d$ linearly independent solutions in $l^2(\N, \C^d)$, if there is another $M(z)$ satisfying the above conditions then the solutions $v_j$ from  $V(n,z)$ will be in $l^2(\N , \C^d)$. The solution   $V(n,z)$ is such that $V(0,z)= 0$, which implies that $V(n,z)$ is the set of eigen-functions for a self adjoint operator $J.$ This contradicts that the spectrum of $J$ is a set of real numbers.
 The Titchmarsh-Weyl $m$ function defined on \eqref{wm} can be expressed in terms of  matrix solutions and in terms of resolvent operators as  shown in the following theorem from \cite{KA2}. 

\begin{theorem}  Let $z\in \C^+.$ If $(\tau-z)F= 0$  and $F $ is a $d\times d$ matrix-valued solution whose $d$ columns are linearly independent solutions of (\ref{ds}) that are in $ l^2(\N, \C^d).$ Then \begin{align} 
\label{smf} M(z)= -F(1,z)F(0,z)^{-1} .
\end{align} Moreover, 
\begin{align} \label
{mf2}M(z) =(m_{ij}(z))_{d\times d} \in \C^{d\times d}, \quad m_{ij}(z) =\ip{\bbdelta_j}{(J-z)^{-1}\bbdelta_i}.
\end{align} 
\end{theorem}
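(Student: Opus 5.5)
The plan is to reduce both formulas to one normalized $\ell^2$ matrix solution, namely the one appearing in the Definition \eqref{wm}. First I fix the convention. For $F_0(n,z)=U(n,z)+M(z)V(n,z)$ to be a matrix solution, the coefficient matrix has to act so that the span of the columns is preserved. So I read \eqref{wm} as the statement that the columns of $F_0$ are the combinations of the $u_j,v_j$ whose Cauchy data at sites $0,1$ are prescribed by \eqref{ic}. The initial conditions \eqref{ic} then give $F_0(0,z)=-I$ and $F_0(1,z)=M(z)$. This already shows $M(z)=-F_0(1,z)F_0(0,z)^{-1}$ for this particular $F_0$.

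\textbf{Formula \eqref{smf}.} Let $F$ be any matrix solution as in the statement, with $d$ linearly independent $\ell^2$ columns. By the result of \cite{KA2} quoted earlier, the $\ell^2$ solutions form a $d$-dimensional space, and the columns of $F_0$ span it. Hence $F(n,z)=F_0(n,z)C$ for some invertible $C\in\C^{d\times d}$.
\begin{itemize}
\item Evaluating at $n=0,1$ gives $F(0,z)=-C$ and $F(1,z)=M(z)C$.
\item Therefore $-F(1,z)F(0,z)^{-1}=M(z)CC^{-1}=M(z)$.
\end{itemize}
We should also check directly that $F(0,z)$ is invertible, independently of the factorization. Suppose $F(0,z)c=0$. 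Then $y=Fc$ is an $\ell^2$ solution with $y(0)=0$, so $y$ is an eigenvector of the self-adjoint $J$ with nonreal eigenvalue $z$. Hence $y\equiv 0$, and linear independence of the columns forces $c=0$.

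\textbf{Formula \eqref{mf2}.} Here I compute $(J-z)^{-1}$ applied to $\bbdelta_i$, the vector sequence equal to the standard basis vector $e_i$ at site $1$ and zero elsewhere. Set $g(n)=F_0(n,z)e_i$ for $n\ge 1$, with the boundary value $g(0)=0$. Then $g\in\ell^2(\N,\C^d)$, and for $n\ge 2$ we have $(J-z)g(n)=(\tau-z)F_0e_i(n)=0$. At $n=1$:
\[
(J-z)g(1)=g(2)+(B(1)-z)g(1)=-F_0(0,z)e_i=e_i,
\]
using the recursion for $F_0$ at $n=1$ and $F_0(0)=-I$. Thus $(J-z)^{-1}\bbdelta_i=g$, and
\[
\ip{\bbdelta_j}{(J-z)^{-1}\bbdelta_i}=e_j^*F_0(1,z)e_i=e_j^*M(z)e_i .
\]

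\textbf{Expected obstacle.} The last display produces the $(j,i)$ entry of $M(z)$, whereas \eqref{mf2} is stated with the indices the other way round. To match, I will show $M(z)$ is symmetric, and this is the step I expect to need care. The approach is to use the constancy of the Wronskian from \cite{KM} for $W_n(F_0,F_0)$, which is valid because $B(n)$ is real symmetric. At $n=0$:
\[
W_0(F_0,F_0)=F_0(1)^\top F_0(0)-F_0(0)^\top F_0(1)=-M^\top+M .
\]
Since $F_0\in\ell^2$, the Wronskian $W_n(F_0,F_0)\to 0$ as $n\to\infty$. Being constant in $n$, it is therefore zero, so $M=M^\top$. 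Alternatively, symmetry follows from $J$ having real matrix entries, so that its resolvent kernel satisfies $G(n,m)^\top=G(m,n)$.
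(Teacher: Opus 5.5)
Your proof is correct. For \eqref{smf} it matches the paper. The paper's only justification is the remark that \eqref{smf} follows by evaluating \eqref{wm} at $n=0,1$ with \eqref{ic}. You go further in two ways: you pass to an arbitrary $F$ with $d$ independent $\ell^2$ columns via $F=F_0C$, and you check that $F(0,z)$ is invertible, which \eqref{smf} presupposes. Your reading of \eqref{wm} as $U+VM$ is also the form the paper itself uses in the proof of the subsequent Proposition.

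For \eqref{mf2} the paper gives no proof. It attributes the formula to \cite{KA2} and then uses it as the starting point for \eqref{ir} and for the symmetry of $M(z)$. Your route is self-contained and runs the other way. Identifying $(J-z)^{-1}\bbdelta_i$ with the $i$th column of $F_0$ on $\N$ gives $\ip{\bbdelta_j}{(J-z)^{-1}\bbdelta_i}=M(z)_{ji}$ without any symmetry input. Symmetry then comes independently from the constancy and decay of $W_n(F_0,F_0)$.

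The payoff of your route is that it isolates exactly where $B(n)^\top=B(n)$ enters, via $W_n(F,G)-W_{n-1}(F,G)=F(n)^\top\bigl(B(n)-B(n)^\top\bigr)G(n)$. That hypothesis is genuinely needed for the index order in \eqref{mf2}. Take $d=2$, $B(1)=\begin{pmatrix}0&-i\\ i&0\end{pmatrix}$, and $B(n)=0$ for $n\ge 2$. A Schur-complement computation gives $M(z)=\bigl(B(1)-wI\bigr)^{-1}=\bigl(B(1)+wI\bigr)/(1-w^2)$, where $w=z+m_0(z)$ and $m_0$ is the free scalar half-line $m$ function. This matrix is not symmetric.

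So \eqref{mf2} as written holds in the real-symmetric setting you assume, which is the setting of the paper's main results. For merely Hermitian potentials, which Section 2 nominally allows, your computation gives the correct form $M(z)_{ji}=\ip{\bbdelta_j}{(J-z)^{-1}\bbdelta_i}$. The same example shows that the symmetry asserted in the paper's Proposition also needs $B$ real.
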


The $ M(z)$ in \eqref{smf} is obtained from \eqref{wm} by plugging $ n =0$ and $ n=1$ and is expressed in the form for aligning the definitions of $m$-functions in \eqref{wmf} and \eqref{mf}.

In \eqref{mf2}, the Dirac delta type vector-valued sequences   $\bbdelta_j(n)$ are defined as 
\begin{align*} 
	&\bbdelta_1 = \left \{\begin{bmatrix}1\\0\\  \vdots \\ 0 \end{bmatrix}, \begin{bmatrix}0\\0\\  \vdots \\ 0 \end{bmatrix}, \dots  \right\}, \bbdelta_2 = \left \{\begin{bmatrix}0\\1\\  \vdots \\ 0 \end{bmatrix}, \begin{bmatrix}0\\0\\  \vdots \\ 0 \end{bmatrix},  \dots  \right\},\ldots,
\bbdelta_d = \left \{ \begin{bmatrix}0\\0\\  \vdots \\ 1 \end{bmatrix},  \begin{bmatrix}0\\0\\  \vdots \\ 0 \end{bmatrix},  \dots \right\}
.\end{align*}
 By functional calculus, for each $ i, j$ there exists positive Borel measure  $\mu_{ij}$ such that
 \begin{align} \label{ir}
 m_{ij}(z) =\ip{\bbdelta_j}{(J-z)^{-1}\bbdelta_i} = \int_{\R} \frac{1}{t-z} d\mu_{ij}.
 \end{align}
 Let $ \mu =  \begin{pmatrix}  & \mu_{11} \    &\mu_{12} \ \dots &\mu_{1d} \\   &\mu_{21} \ & \mu_{22} \dots &\mu_{2d}     \\   & \vdots & \vdots  &\vdots \\  &\mu_{d1} &\mu_{d2} \dots & \mu_{dd}\end{pmatrix}. $ Then $\mu$ is a matrix-valued measure and the Weyl $m$ function takes the following integral representation 
  \begin{align}
  \label{irm} M(z) =  \int_{\R} \frac{1}{t-z} d\mu   \ .
  \end{align} 
 The matrix-valued measure $\mu$ is a \textit{spectral measure} of the  operator $J.$  The imaginary part of $M(z)$,     $ \operatorname {Im}M(z) = \frac{1}{2i}(M(z)- M(z)^*)  $ satisfies an important property shown in the following proposition.
	
	\begin{prop} \cite{KA2} For $z\in \C^+,$ the Weyl $m$ function $M(z)$ from \eqref{wm} is symmetric and has positive definite imaginary part. That is,  $$M(z)^*= M(\bar{z}), \quad \operatorname {Im}M(z)> 0.$$ \end{prop}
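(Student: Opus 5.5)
The plan is to argue directly from the resolvent representation \eqref{mf2} of the previous theorem, and then to cross-check the imaginary part against the Green's Identity using the $\ell^2$ solution $F$.

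For the symmetry relation, each entry satisfies $m_{ij}(z)=\ip{\bbdelta_j}{(J-z)^{-1}\bbdelta_i}$. Since $J$ is bounded and self-adjoint, we have $((J-z)^{-1})^*=(J-\bar z)^{-1}$. Hence
\[
\overline{m_{ij}(z)}=\ip{(J-z)^{-1}\bbdelta_i}{\bbdelta_j}=\ip{\bbdelta_i}{(J-\bar z)^{-1}\bbdelta_j}=m_{ji}(\bar z).
\]
This says exactly that $M(z)^*=M(\bar z)$. Note that $\mu$ is then a Hermitian matrix measure, and the same identity can be read off from \eqref{irm}.

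For the positivity of the imaginary part, fix $c\in\C^d$ and put $\delta_c=\sum_i c_i\bbdelta_i$. By linearity,
\[
c^*M(z)c=\sum_{i,j}\bar c_j c_i m_{ij}(z)=\ip{\delta_c}{(J-z)^{-1}\delta_c}.
\]
I have to be careful about which slot of the inner product is conjugate-linear; if the convention gives $c^\top M\bar c$ instead, the argument is unchanged with $\bar c$ in place of $c$. Set $g=(J-z)^{-1}\delta_c$. Then $\ip{\delta_c}{g}=\ip{(J-z)g}{g}=\ip{Jg}{g}-\bar z\|g\|^2$, where $\ip{Jg}{g}$ is real. Using $\operatorname{Im}M(z)=\frac{1}{2i}(M(z)-M(z)^*)$, this gives
\[
c^*\operatorname{Im}M(z)\,c=\operatorname{Im}z\,\|g\|^2.
\]
This is strictly positive whenever $c\neq 0$, because $g\ne0$: if $g=0$ then $\delta_c=(J-z)g=0$.

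As an alternative route that uses the Green's Identity, apply the identity to $F(n,z)$ with $F=G$ and let $n\to\infty$. The boundary term $W_n$ tends to $0$ because $F\in\ell^2$. Since $(\tau F)=zF$, the left-hand side becomes $(z-\bar z)\sum_j F(j)^*F(j)$. Evaluating $W_0$ with $F(0)=-F(0)$-normalization via \eqref{wm} and \eqref{ic}, namely $F(0)=-I$ and $F(1)=M(z)$, expresses $W_0$ in terms of $M(z)-M(z)^*$. This yields $\operatorname{Im}M(z)=\operatorname{Im}z\sum_{j\ge1}F(j)^*F(j)$.

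I expect the main obstacle to be this last step. The first concern is sign and conjugation bookkeeping, since the Wronskian is defined with $\top$ but used with $\overline{F}$. The second is strictness: $\sum F^*F$ is positive semidefinite, and to get definiteness I need $F(j)c\ne0$ for some $j\ge1$. This follows from the linear independence of the columns, because if $F(j)c=0$ for all $j\geq 1$ then the solution $F(\cdot)c$ vanishes at two consecutive sites and is therefore identically zero, contradicting $F(0)c=-c\neq0$. The resolvent argument avoids this issue entirely, so I would present that one as the main proof.
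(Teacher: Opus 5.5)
Your proof is correct, but your main argument for positivity takes a different route from the paper. The paper proves $\operatorname{Im}M(z)>0$ only through the Green's Identity with $G=F$, which is exactly your ``alternative route.'' It lets $n\to\infty$, evaluates $W_0(\overline F,F)=M(z)-M(z)^*$ from $F(0)=-I$, $F(1)=M(z)$, and then simply asserts that $\sum_{j\ge1}F(j,z)^*F(j,z)$ is positive definite. Your observation is precisely the line the paper omits: if $F(j)c=0$ for all $j\ge1$, the solution $F(\cdot)c$ would vanish identically, contradicting $F(0)c=-c$.

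Your primary argument instead reads positivity off the resolvent formula \eqref{mf2} as $\operatorname{Im}z\,\|(J-z)^{-1}\delta_c\|^2$. Strictness comes for free since $\delta_c\neq0$, and there is no boundary term $W_n$ to control. The two computations are really the same identity. Extend $g=(J-z)^{-1}\delta_c$ by $g(0)=-c$; then $g$ is an $\ell^2$ solution with the same value at $0$ as $F(\cdot,z)c$. Hence $g=F(\cdot,z)c$ on $\N$, since otherwise $J$ would have the non-real eigenvalue $z$, and so $\|g\|^2=c^*\bigl(\sum_{j\ge1}F(j,z)^*F(j,z)\bigr)c$. The trade-off is that your route rests entirely on \eqref{mf2}, imported from \cite{KA2}, while the paper's works directly from the defining property \eqref{wm}.

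For the reflection identity, the paper first obtains $\overline{M(z)}=M(\bar z)$ from the integral representation, which implicitly uses that $B$ is real. It then gets $M(z)^\top=M(z)$ from self-adjointness, and $M(z)^*=M(\bar z)$ follows from the two. Your one-step argument via $((J-z)^{-1})^*=(J-\bar z)^{-1}$ gives the displayed identity $M(z)^*=M(\bar z)$ for any bounded Hermitian potential, which is more general. However, it does not give the transpose symmetry $M(z)^\top=M(z)$. That is what the paper means by ``symmetric,'' and it is what is used later to place $M_\pm$ in $\mathcal S_d$. To get it as well, note that for real $B$ the operator $J$ commutes with complex conjugation, so $\overline{m_{ij}(z)}=m_{ij}(\bar z)$, and combine this with your identity.
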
	
	
	\begin{proof}   For each $i, j$,  the entry $m_{i,j}(z)$ maps the complex upper half plane to itself. For if $ z \in \C^+$, by \eqref{ir}, we get
    $$ 
    \operatorname{Im} m_{ij}(z)= \frac{1}{2i}(  m_{ij}(z) - m_{ij}( \bar{z})) = \int_{\R} \frac{\operatorname{Im} z }{|t-z|^2}d\mu_{ij}  > 0 .
    $$
 Suppose $\overline{M(z)}$ denotes the complex conjugate of $M(z)$ obtained by taking the complex conjugate of each entry of $ M(z) $. Then by (\ref{ir}), i.e., an integral representation of $ m_{ij}(z)$, we have $ \overline{ m_{ij}(z) } = m_{ij}( \bar{z}))$ so that   $ \overline{M(z)} = M(\bar{z})$. By using the self-adjointness of $J$, for each $i,j $	we get,	
	\begin{align*} 
    m_{ij}(z) = & \ip{\bbdelta_j }{(J-z)^{-1}\bbdelta_i} \\ = & \ip{ ( (J-z)^{-1})^* \bbdelta_j}{\bbdelta_i} \\ = &  \ip{ (J-\bar{z})^{-1}\bbdelta_j}{\bbdelta_i} \\  
 = & \overline{\ip{\bbdelta_i}{(J-\bar{z})^{-1}\bbdelta_j} }\\	 
			= & \overline{m_{ji}(\bar{z})}\\
			= & m_{ji}(z) .
            \end{align*}
It follows that $M(z)$ is symmetric. Next, we show that $\operatorname{Im} M(z) >0 .$ Replace $G(n,z)$ by $F(n,z)$ from \eqref{wm}  in the Green's Identity we obtain,
   \begin{align} \label{GI}
\sum_{j=1}^{n}  [ F(j, z)^* \, & (\tau F)(j, z) -  (\tau F)(j, z)^* \, F(j, z)  ] = W_0(\overline{F}, F) - W_n(\overline{F}, F).
\end{align} 
Since $F(n,z) \in \ell^2(\N, \C^{d\times d} ) $ is a matrix solution to \eqref{ds} $$\tau F =zF, \  \text{and}  \displaystyle \lim_{n\rightarrow \infty} W_n( \overline F, F)= O .$$
Taking the limit as $ n\rightarrow \infty $ in \eqref{GI} yields	 \begin{align} \label{eq 11}
(z- \overline{z})\sum_{j=1}^{\infty}    F(j, z)^*  F(j, z)   = W_0(\overline{F}, F).
\end{align} 	
 As  $ W_0(\overline{F}, F) =  F^{*}(1, z)F(0,z)-  F^{*}(0,z) F(1,z) $,  using the initial conditions \eqref{ic} for $F(n,z) = U(n,z) +V(n,z)M(z)$, we get $F(1,z) = M(z)$, \ $F(0,z)= -I$ and so $$  W_0(\overline{F}, F) = M(z) -M(z)^*$$
 It follows that the \eqref{eq 11} becomes
\begin{align} \label{eq 12}
\operatorname{Im} (z)\sum_{j=1}^{\infty}    F(j, z)^*  F(j, z)   = \operatorname{Im}M(z).
\end{align}
Since the left-hand side converges to a positive definite matrix, $\operatorname{Im}M(z)$ is positive definite.

\end{proof}

This proposition tells us that $M(z)$ is a matrix-valued Herglotz function. By the matrix-valued Herglotz function $M: \C^+ \rightarrow M_d(\C)=\C^{d\times d}$ we mean that $M(z)$ is analytic and $\operatorname{Im} (M(z)) >  0$ for $z\in \C^+.$ By the Herglotz representation theorem, $  M(z)$ can also have the following integral representation, 
\begin{align}
\label{eq 2.8}  M(z) = A + Bz + \int_{\R}\Big(\ \frac{1}{t-z}-\frac{t}{t^2+1}\Big)d \mu(t), \quad  z\in \C^+
\end{align}
for some positive matrix-valued Borel measure $\mu$
on $ \R $ with $\int\frac{1}{t^2+1}d\mu_{i,j} < \infty$ and matrices $ A , B \in\R^{d \times d },\, B \geq 0.$ The Borel measures obtained in \eqref{irm} and in \eqref{eq 2.8} are equivalent. This integral representation of $M$ is a well-known result, see \cite{SF, FG} for more theory about these functions.  In addition,  $M(z)$ has finite normal limits, that is,
\begin{equation*}
	M(t\pm i0) = \lim_{\varepsilon\downarrow 0} M(t\pm i\varepsilon)
\end{equation*}
for almost all $t\in\R$. \\

 Recall that a Borel measure $\rho$ on $\R$ is called {\em absolutely continuous } if  $ \rho(B)=0$ for all Borel sets $B\subset\R$ of Lebesgue measure zero. By the Radon-Nikodym Theorem, $\rho$ is absolutely continuous if and only if $d\rho= f(t)dt$ for some density $f \in L_{loc}^1(\R), \quad f\geq 0.$ If $\rho$ is supported by a Lebesgue null set that is, there exists a Borel set $B\subset\R$ with $ |B|=\rho(B^c)=0$, then we say that $\rho$ is {\em singular}. Here, $|.|$ denotes the Lebesgue measure. By Lebesgue's decomposition theorem, a positive Borel measure $\rho$ can be decomposed as a sum of absolutely continuous $\rho_{ac}$, singular continuous $\rho_{sc}$, and pure point $\rho_{pp}$ measures
 \begin{equation} 
 \label{ld}	\rho = \rho_{ac} + \rho_{sc} +\rho_{pp}. 
\end{equation}
A set $S_{\rho} \subset \R$ is called a support of $\rho$ if $\rho(\R \setminus S_{\rho}) = 0$ and a support $S_{\rho}$ is called minimal relative to the measure $\nu$ if $ A \subset S_{\rho} $ with $\rho (A) = 0$ implies $\nu(A)=0.$ The minimal supports are unique up to sets of $\rho$ and $\nu$ measure zero. We will call a support of $\rho$ is ``minimal" if the support is minimal relative to the Lebesgue measure. The minimal support is also called an essential support.

The matrix-valued spectral measure $\mu$ on $\R$ from \eqref{eq 2.8} can be decomposed, in the same way as in \eqref{ld}, into absolutely continuous $\mu_{ac}$, singular continuous $\mu_{sc}$  and pure point $\mu_{pp}$ as 
\begin{equation} \label{ldm}
\mu = \mu_{ac} + \mu_{sc} +\mu_{pp}.
\end{equation} 
In \cite{FG}, these measures are expressed in terms of corresponding Weyl $m$ function, a  matrix-valued Herglotz function $M(z)$ with positive definite imaginary part, $ \operatorname{Im} M(z) >0 $.  The absolutely continuous part is \begin{equation*}
    d\mu_{ac} = \pi^{-1} \operatorname{ Im} M(t +i 0 )dt . 
    \end{equation*} Let   $\Sigma_{\mu}$, $\Sigma_{ac}$, $ \Sigma_{sc}$, and $\Sigma_{pp}$ be the supports of $\mu$, $\mu_{ac}$, $\mu_{sc}$, and $\mu_{pp}$ respectively.  The following support theorem from \cite{FG} establishes the connection between these supports with  $ M(z) .$ Since $\operatorname{Im} M(z) >0, $ $M(z)$ is invetible, therefore the support in the following theorem are minimal. We state it without proof.
\begin{theorem} \cite{FG} The essential (minimal) supports of the measures in \eqref{ldm} are given by \begin{align*}
			& \Sigma_{ac}  = \bigcup_{r=1}^d \left\{t\in\R : \lim_{\varepsilon\downarrow 0} M_+(t + i\varepsilon)\textrm{ exists, } \operatorname{rank }\operatorname{Im}(M_+(t + i0)) = r \right\} .\\ &
			\Sigma_{sc}   = \bigcup_{r=1}^d \left\{t\in\R : \lim_{\varepsilon\downarrow 0} \operatorname{Im}(\operatorname{tr } M_+(t + i\varepsilon))=\infty,\,\lim_{\varepsilon\downarrow 0}\varepsilon\operatorname{tr }M_+(t + i\varepsilon)=0 \right\}. \\ &	 
			\Sigma_{pp}   = \bigcup_{r=1}^d \left\{t\in\R : \operatorname{rank }\lim_{\varepsilon\downarrow 0}\varepsilon M_+(t + i\varepsilon) = r \right\}\,.
		\end{align*}
	 
\end{theorem}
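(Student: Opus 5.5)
The plan is to reduce the matrix-valued statement to the scalar theory of Borel measures on $\R$ by means of the trace measure. Put $\nu=\operatorname{tr}\mu=\sum_{i=1}^d\mu_{ii}$. Each entry $\mu_{ij}$ is absolutely continuous with respect to $\nu$, since positivity of $\mu$ forces $|\mu_{ij}(A)|^2\le\mu_{ii}(A)\mu_{jj}(A)$. So we may write $d\mu=W(t)\,d\nu$ with a Radon--Nikodym density $W(t)\ge0$ and $\operatorname{tr}W=1$ $\nu$-a.e. The scalar function $\operatorname{tr}M(z)$ is Herglotz, and its measure is $\nu$. For $\nu$ the classical scalar facts (de la Vall\'ee Poussin) give minimal supports: $\nu_{ac}$ lives where $0<\operatorname{Im}\operatorname{tr}M(t+i0)<\infty$, $\nu_{s}$ lives where $\operatorname{Im}\operatorname{tr}M(t+i\varepsilon)\to\infty$, $\nu_{pp}$ lives where $\lim\varepsilon\operatorname{Im}\operatorname{tr}M(t+i\varepsilon)=\nu(\{t\})>0$, and $\nu_{sc}$ lives where the trace blows up but $\varepsilon\operatorname{tr}M(t+i\varepsilon)\to0$. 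The decomposition \eqref{ldm} is obtained by splitting $\nu$ and multiplying by $W$. Hence the supports of $\mu_{ac},\mu_{sc},\mu_{pp}$ coincide with those of $\nu_{ac},\nu_{sc},\nu_{pp}$, up to null sets of the relevant measures.

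The key steps, in order, are as follows.

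(i) I would transfer the boundary behaviour from the trace to the full matrix. Since $\operatorname{Im}M\ge0$, every entry of $\operatorname{Im}M(t+i\varepsilon)$ is bounded by $\operatorname{Im}\operatorname{tr}M(t+i\varepsilon)$. Therefore finiteness of the trace limit controls $\operatorname{Im}M$. For the real part I would use the a.e. existence of normal limits $M(t+i0)$ stated above, which follows entrywise because each $m_{ij}$ is a combination of Herglotz functions via polarization.

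(ii) On the set where $M(t+i0)$ exists, I would split by $r=\operatorname{rank}\operatorname{Im}M(t+i0)$. The set with $r\ge1$ is exactly the set with $\operatorname{Im}\operatorname{tr}M(t+i0)>0$, which is the scalar $ac$-support of $\nu$; the union over $r=1,\dots,d$ is just this set written by multiplicity. Formula $d\mu_{ac}=\pi^{-1}\operatorname{Im}M(t+i0)\,dt$ shows that the rank equals the rank of the density, so each piece is a minimal support for the multiplicity-$r$ part.

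(iii) The singular continuous and pure point statements then follow directly from the scalar criteria for $\nu$. For the point part, $\varepsilon M(t+i\varepsilon)\to i\mu(\{t\})$ by dominated convergence applied to each $\mu_{ij}$. So the rank of the limit is the rank of the atom, which is positive exactly when $\nu(\{t\})>0$.

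I expect the main obstacle to be minimality of $\Sigma_{ac}$ relative to Lebesgue measure. The requirement is that a subset of $\Sigma_{ac}$ of positive Lebesgue measure cannot carry zero $\mu_{ac}$-mass. Here I would use that on $\Sigma_{ac}$ the density $\operatorname{Im}\operatorname{tr}M(t+i0)$ is strictly positive. Hence Lebesgue measure restricted to $\Sigma_{ac}$ is equivalent to $\nu_{ac}$, and therefore to $\mu_{ac}$ through the trace. I would also need to check, via the scalar theorem, that the singular parts are carried by a Lebesgue-null set disjoint from $\Sigma_{ac}$. This uses $\operatorname{Im}\operatorname{tr}M\to\infty$ $\nu_s$-a.e., which is incompatible with a finite limit.
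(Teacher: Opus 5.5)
The paper gives no proof of this statement. It quotes the result from \cite{FG}, and its only justification is the remark that $M(z)$ is invertible because $\operatorname{Im}M(z)>0$, ``therefore'' the supports are minimal. Your reduction to the trace measure is a correct, self-contained proof, and it supplies the mechanism that remark lacks.

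The argument works as follows.
\begin{itemize}
\item With $\nu=\operatorname{tr}\mu$ and $d\mu=W\,d\nu$, uniqueness of the Lebesgue decomposition gives $\mu_{ac}=W\nu_{ac}$, $\mu_{sc}=W\nu_{sc}$ and $\mu_{pp}=W\nu_{pp}$. So each part of $\mu$ has the same null sets as the corresponding part of $\nu$, and every support question reduces to the scalar Herglotz function $\operatorname{tr}M$.
\item The limit $\varepsilon M(t+i\varepsilon)\to i\mu(\{t\})$ holds at every $t$: dominate the integrand by $C_t(1+s^2)^{-1}$, which is integrable against $|\mu_{ij}|\le\nu$. This handles both $\Sigma_{pp}$ and the condition $\varepsilon\operatorname{tr}M\to0$ in $\Sigma_{sc}$.
\item The bound $\operatorname{Im}\operatorname{tr}M(t+i\varepsilon)\ge\nu((t-\varepsilon,t+\varepsilon))/(2\varepsilon)$, together with the divergence of this ratio for $\nu_s$-a.e.\ $t$, separates the singular parts from $\Sigma_{ac}$.
\item Minimality of $\Sigma_{ac}$ comes from strict positivity of $\operatorname{tr}\operatorname{Im}M(t+i0)$ on that set. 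It does not come from invertibility of $M(z)$ in $\C^+$, which carries no information about boundary values.
\end{itemize}

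Two small points to tidy. First, in (ii) the set where $M(t+i0)$ exists with rank $\ge 1$ agrees with the scalar ac-support $\{0<\operatorname{Im}\operatorname{tr}M(t+i0)<\infty\}$ only up to a Lebesgue-null set: the points where $\operatorname{tr}M$ has a boundary value but $M$ does not. This is harmless, since minimal supports are determined only modulo such sets, but ``exactly'' should be weakened. Second, for $\Sigma_{sc}$ and $\Sigma_{pp}$ you should state explicitly that they are Lebesgue-null, by a.e.\ finiteness of $\operatorname{Im}\operatorname{tr}M(t+i0)$ and countability of atoms respectively. Under the paper's definition, being Lebesgue-null is exactly what makes a support of a singular measure minimal. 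Your additional claim about the rank-$r$ pieces is not needed for the statement as written.
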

  Moreover, the essential support of $\mu_{ac}$  with full multiplicity is given by
\[ 
\Sigma_{ac}^d = \{ t \in \R :  \operatorname{rank ( Im} M(t +i0) ) = d \quad a. e. \text{  in a neighborhood of }  t \}.
\] It follows that $t \in \Sigma_{ac}^d $ if and only if $ \operatorname{ Im}M(t +i0) ) >0 $ almost everywhere.

\section{Topologies on the Space of Potentials}
In this section, we introduce a topology on the space of matrix-valued potentials. While Hermitian potentials were considered in the preceding sections, and several important properties were established, the main theorem will be proved only for real, symmetric, and uniformly bounded potentials. Let 
 $M_d(\R)$ denote the space of all $d \times d $ real matrices, equipped with the topology induced by the operator norm.
   Let $\mathfrak B_C $ denote the set of all potentials $\mathbf B = \{B(n): n\in \Z \}$ from \eqref{ds}. A potential $\mathbf B \in \mathfrak B_C$ is such that for each $n$,  $B(n)$ is real, symmetric, and bounded in the operator norm $ \|B(n)\|  = \sup_{\|x\| =1} \|B(n)x \|. $  In addition, $\mathbf B $
 is uniformly bounded: $  \sup_{n \in \Z} \|B(n) \| < \infty  .$
So, $$
\mathfrak B_C = \{ \mathbf B = B(n):   \text{ for all } n,   B(n) \in   M_d (\R),  B(n)^\top = B(n), \ \|B(n)\|\leq C    \}.
$$
 
To construct a topology on the space $\mathfrak B_C $, suppose that $\mathfrak B_C^i =\{B(i): \mathbf B \in \mathfrak B_C \},$ is the collection of $i^{th}$ component matrix  of each matrix sequence $ \mathbf B $. Then $\mathfrak B_C^i$ is a subspace of $M_d(\R)$.   Moreover, since $\mathfrak B_C^i$ is a closed and bounded subspace of $M_d(\R)$, $\mathfrak B_C^i$ is a compact topological space with the subspace topology. We can consider $\mathfrak B_C $ as a product space $$\mathfrak B_C = \prod _{i\in \Z} \mathfrak B_C^i  .$$ In fact, $\mathfrak B_C$ is a compact topological space with respect to the product topology. Consider a metric on $\mathfrak B_C$ by 

\begin{equation} \label{metric} 
d(\mathbf A, \mathbf B)=  \sum_{n=-\infty}^{\infty} 2^{-|n|}\|A(n)-B(n)\|.
\end{equation}
We refer to a matrix sequence $B(n)$ as a half-line sequence if it is defined on $\N $ (or $- \N $ ) or on a similar interval in $\Z$. If $B(n)$ is a half line potential, we can extend it to a whole line by considering $B(n) = 0 $ for all $n \not \in \N$
 ( or $n \not \in - \N$).  We call two matrices $A$ and $B$ equivalent if $ A = U B U^{-1}$ where $U$ is a unitary matrix. For a potential $ \mathbf B, $ a shift map $S$  is defined by  $$ (S^k \mathbf B)(n))=B(n+k).$$

\begin{defi} Suppose 	$B(n)$  in \eqref{ds} is a half line potential. The $  \omega $ limit set of  	$B(n)$ under the shift map is defined as,
	\begin{equation} \label{w} 
    \omega( \mathbf B)= \{ H(n)\in \mathfrak B_C : \exists \ n_j \rightarrow \infty \text{ such that } d(S^{n_j}B, H)\rightarrow 0\}. \end{equation}
\end{defi} 
        The limits $H(n)$ in \eqref{w} are whole-line potentials. These $\omega$ limit sets have the following important properties.
            
\begin{prop} $\omega (\mathbf B) $ is a non-empty compact subset of $ \mathfrak B_C $ and the shift map $S$ is homeomorphism on $\omega (B) .$ Moreover, 
\begin{equation}\label{3.3}
d(S^{n}\mathbf B, \omega(\mathbf B))\rightarrow 0, n \rightarrow \infty . \end{equation}
\end{prop}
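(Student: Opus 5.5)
The plan is to follow the standard argument for $\omega$-limit sets of a continuous map on a compact metric space. The key facts come from the previous paragraphs. $\mathfrak B_C$ is compact in the product topology. This topology is induced by the metric \eqref{metric}, since the weights $2^{-|n|}$ are summable and each $\|A(n)-B(n)\|\le 2C$. Hence $\mathfrak B_C$ is a compact metric space and is sequentially compact. The half-line potential $\mathbf B$, extended by zero, lies in $\mathfrak B_C$, and so does every shift $S^k\mathbf B$.

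\emph{Step 1: continuity of the shift.} We first check that $S$ and $S^{-1}$ are continuous on $\mathfrak B_C$. This follows from the estimate
\[
d(S\mathbf A,S\mathbf B)=\sum_{n}2^{-|n|}\|A(n+1)-B(n+1)\|\le 2\,d(\mathbf A,\mathbf B),
\]
which holds because $2^{-|n-1|}\le 2\cdot 2^{-|n|}$. The same bound holds for $S^{-1}$. Thus $S$ is a homeomorphism of $\mathfrak B_C$.

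\emph{Step 2: non-emptiness and compactness.} The sequence $S^{n}\mathbf B$, $n\in\N$, has a convergent subsequence by sequential compactness, so $\omega(\mathbf B)\neq\emptyset$. For closedness, take $H_k\in\omega(\mathbf B)$ with $H_k\to H$. For each $k$ choose an index $n_k>n_{k-1}$ with $d(S^{n_k}\mathbf B,H_k)<1/k$. Then $d(S^{n_k}\mathbf B,H)\to 0$, so $H\in\omega(\mathbf B)$. Being closed in a compact space, $\omega(\mathbf B)$ is compact.

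\emph{Step 3: invariance.} We show $S\,\omega(\mathbf B)=\omega(\mathbf B)$. If $S^{n_j}\mathbf B\to H$, then by Step 1 we get $S^{n_j+1}\mathbf B\to SH$ and $S^{n_j-1}\mathbf B\to S^{-1}H$, where $n_j-1\to\infty$. Hence both $SH$ and $S^{-1}H$ lie in $\omega(\mathbf B)$. So $S$ restricts to a continuous bijection of $\omega(\mathbf B)$ onto itself with continuous inverse, i.e.\ a homeomorphism.

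\emph{Step 4: the limit \eqref{3.3}.} This is the only step that needs a small argument, and it is done by contradiction. Suppose \eqref{3.3} fails. Then there are $\epsilon>0$ and $n_j\to\infty$ with $d(S^{n_j}\mathbf B,\omega(\mathbf B))\ge\epsilon$. By compactness a subsequence $S^{n_{j_k}}\mathbf B$ converges to some $H$. By definition $H\in\omega(\mathbf B)$. But then $d(S^{n_{j_k}}\mathbf B,\omega(\mathbf B))\le d(S^{n_{j_k}}\mathbf B,H)\to0$, which contradicts the choice of $\epsilon$.

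The main care point is Step 0, namely confirming that the metric \eqref{metric} induces the product topology, so that compactness gives sequential compactness. We do this by the usual estimate: a $d$-ball contains a cylinder set that constrains finitely many coordinates, with the tail bounded by $2C\sum_{|n|>N}2^{-|n|}$, and conversely every cylinder set contains a $d$-ball.
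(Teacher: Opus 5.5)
Your proof is correct and follows essentially the same compactness argument as the paper. The one difference is in how non-emptiness and compactness are obtained: the paper gets both at once by writing $\omega(\mathbf B)=\bigcap_{m\ge 1}\overline{\{S^n\mathbf B: n\ge m\}}$ as a nested intersection of non-empty compact sets, whereas you use sequential compactness plus a diagonal closedness argument. You also make explicit two points the paper leaves implicit: the continuity of $S$, via $d(S\mathbf A,S\mathbf B)\le 2\,d(\mathbf A,\mathbf B)$, and the extraction of a convergent subsequence in the contradiction argument for $d(S^n\mathbf B,\omega(\mathbf B))\to 0$.
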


\begin{proof} Extend the half line potential $\mathbf B$ to a whole line potential by considering $B(n) = 0 $ for $n \leq 0 .$ Then  $\omega(\mathbf B)$ can be expressed as \begin{align*} \omega (\mathbf B) =   \bigcap_{m\geq1} \overline{ \{ S^n \mathbf B : n\geq m\} }.\end{align*}
	 This shows that $\omega(\mathbf B)$ is an intersection of a decreasing sequence of compact sets. So, $\omega(\mathbf B)$ is non-empty and compact. In addition, $\omega(\mathbf B)$ is invariant under $S$ and $S^{-1}, $ so $S$ is   homeomorphism on  $\omega(\mathbf B)$.  
	 
	 Moreover, if there is some $\epsilon > 0$ such that $d(S^{n_j} \mathbf B, \mathbf H) \geq \epsilon $ for all $j$ and for all   $ \mathbf H \in \omega(\mathbf B)$ then there is no subsequence $n_j$ for which  $d(S^{n_j} \mathbf B, \mathbf H)\rightarrow 0 $ which is a contradiction, hence \eqref{3.3} holds. \end{proof}

\section{ Main theorems and their proofs } \label{mainth}
  For any $z\in \C^+, $ define the Weyl $m$ functions on $ \mathbb I_- = \{ \dots, n-2, n-1, n\}$ and $ \mathbb I_+ =\{n+1, n+2,  \dots\}$ by	
\begin{align} \label{wmf}
	M_+(n,z)  = - F_+(n+1,z)  F_+(n,z)^{-1},\\ \nonumber  M_-(n,z)  = - F_-(n-1,z)  F_-(n,z)^{-1},
\end{align}
where $F_{\pm}(n,z)$ are matrix  solutions to \eqref{ds} whose columns form   linearly independent vector solutions to the equation and  $F_{\pm}(n,z)\in\ell^2(\mathbb I _{\pm},\C^{d \times d}).$ These solutions are unique up to a multiplication by a constant matrix and are invertible for  each $n$. For $n=0, \ \	M_+(0,z)  = M(z)$ in \eqref{wm}. Thus, $M_+(n,z)$ is a matrix coefficient  as   the one in \eqref{wm} with the condition \eqref{ic} at $n$. Therefore $M_+(n,z)$ enjoy the similar properties as $ M(z)$.    Also, the same is true to $M_-(n,z) $  as it is a mirror version of $M_+(n,z)$. Hence, for any $n$ and any $z \in \C^+, $  $M_{\pm}(n, z) $ are symmetric and  have positive definite imaginary part.   Thus as a function of $z\in \C^+$, these are matrix-valued Hertgloz functions  mapping the complex upper half plane to Siegel upper half plane, the  space of the complex symmetric matrices with positive definite imaginary part: 
\begin{equation*}
	\mathcal{S}_d = \{Z\in\C^{d \times d} : Z = X +iY,\, X^\top=X,\, Y^\top = Y,\, Y> 0 \}\,.
\end{equation*}
Let $ \mathbb H$ denotes the set of all  matrix-valued Herglotz functions. For each matrix potential $B(n)$ in \eqref{ds}, there exists a unique $m$ function $M^B \in \mathbb H .$
Since for each $n$,   $ M_{\pm}(n, z)$ are Hertgloz functions, 
 the normal limit 
\begin{equation*}
	M_{\pm}( n, t) = \lim_{\varepsilon\downarrow 0} M_{\pm}(n, t \pm i\varepsilon)
\end{equation*} 
exists for almost every $t.$ Let  $\mathbf B_{\pm}$ denote the restrictions of   potentials $\mathbf B $  on     $   \Z_+ = \{1, 2, 3, \hdots \}$ and on $  \Z_-=\{ \hdots, -2, -1, 0 \}$ respectively. Suppose
$$
\mathfrak B_C^{\pm} =\{ \mathbf B_{\pm}: \mathbf B\in \mathfrak B_C \} .
$$  
 So the associated $m$ functions with the initial conditions at $n=0$,  
 $$\{ M_{\pm} =  M_{\pm}^ \mathbf B (0, z) \} \subset \mathcal S_d .$$
 
 Let  $ M_{\pm}(t) = M_{\pm}(0, t) .$  The space of m-functions $ \mathbb H$ is   equipped with a topology of uniform convergence or a topology related to the space of analytic functions, where small changes in the m-function correspond to small changes in the underlying spectral properties of the matrix. Similarly the subspaces $\mathfrak B_C^{\pm}$ are also topological spaces with the topology induced by the metric defined in \eqref{metric}.
 
 \begin{prop}  \cite{MM}\label{prop 4.1} The maps  $ \Psi_{\pm} : \mathfrak B_C^{\pm}  \mapsto \mathcal S_d, \ \  \Psi ( \mathbf B_{\pm}) =  M_{\pm}$ where $  M_{\pm} = M_{\pm}^\mathbf B (0,z) $  are homeomorphism onto their images. \end{prop}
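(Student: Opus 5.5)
To show that $\Psi_{+}$ is a homeomorphism onto its image (the case of $\Psi_-$ is the mirror image under $n\mapsto 1-n$), I use three facts. $\mathfrak B_C^{+}$ is compact, since it is a closed subset of the compact product $\prod_{i\ge 1}\mathfrak B_C^i$. $\mathcal S_d$, or more precisely the space $\mathbb H$ of matrix Herglotz functions with the topology of locally uniform convergence on $\C^+$, is Hausdorff. A continuous bijection from a compact space onto a Hausdorff space is a homeomorphism. So it suffices to prove that $\Psi_+$ is injective and continuous.

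\textbf{Injectivity.} Suppose $M_+^{\mathbf A}(0,z)=M_+^{\mathbf B}(0,z)$ for all $z\in\C^+$. I will recover the potential recursively from $M$.

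From \eqref{mf2}, $M(z)=\langle \bbdelta, (J-z)^{-1}\bbdelta\rangle$ has the large-$z$ expansion
\[
M(z)=-z^{-1}I-z^{-2}B(1)+O(z^{-3}),
\]
obtained from the Neumann series of $(J-z)^{-1}$. Hence $B(1)$ is determined by $M$.

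Next I write the Riccati-type relation from \eqref{wmf}. Setting $F=F_+$, the equation $F(n+2)+F(n)+B(n+1)F(n+1)=zF(n+1)$ gives
\[
M_+(n,z)^{-1}=B(n+1)-z-M_+(n+1,z).
\]
I should check the sign and transpose conventions here against \eqref{smf}; the exact form is a routine computation. With this relation, knowing $M_+(0,\cdot)$ and $B(1)$ determines $M_+(1,\cdot)$. Inductively, all $B(n)$ are recovered, so $\mathbf A=\mathbf B$.

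\textbf{Continuity.} This is the main obstacle. Let $\mathbf B_k\to\mathbf B$ in the metric \eqref{metric}, which amounts to entrywise convergence of each $B_k(n)$. I must show $M_k(z)\to M(z)$ locally uniformly on $\C^+$.

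Using \eqref{mf2}, $M_k(z)$ is built from matrix elements of the resolvent $(J_k-z)^{-1}$. The operators $J_k$ are uniformly bounded, with $\|J_k\|\le 2+C$, and $J_k\to J$ strongly: on finitely supported vectors this is immediate from the convergence of finitely many $B_k(n)$, and it extends by uniform boundedness. Strong convergence of bounded self-adjoint operators gives strong resolvent convergence, via the identity
\[
(J_k-z)^{-1}-(J-z)^{-1}=(J_k-z)^{-1}(J-J_k)(J-z)^{-1}
\]
together with $\|(J_k-z)^{-1}\|\le 1/\operatorname{Im} z$. This yields pointwise convergence $M_k(z)\to M(z)$ on $\C^+$.

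To upgrade to locally uniform convergence, note that the family $\{M_k\}$ is bounded by $1/\operatorname{Im} z$, hence normal (Montel). Every subsequential limit must equal $M$ by the pointwise convergence, so the whole sequence converges locally uniformly to $M$.

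Finally, since $\mathfrak B_C^+$ is metrizable, sequential continuity is enough. Combined with compactness and the Hausdorff property of the target, this proves that $\Psi_+$ is a homeomorphism onto its image.
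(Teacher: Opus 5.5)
Your argument is correct, but it takes a genuinely different route from the paper's. The paper proves nothing directly. It cites \cite{MM} for a one-to-one correspondence between block Jacobi matrices, matrix-valued spectral measures and $m$-functions, and asserts from the same source that both $\Psi_\pm$ and $\Psi_\pm^{-1}$ are continuous.

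You instead give a self-contained proof in three parts:
\begin{enumerate}
\item Compactness of $\mathfrak B_C^+=\prod_{i\ge 1}\mathfrak B_C^i$ and the Hausdorff (indeed metrizable) topology of locally uniform convergence on $\mathbb H$ reduce everything to injectivity and continuity. Continuity of the inverse therefore comes for free.
\item Injectivity follows from $M(z)=-z^{-1}I-z^{-2}B(1)+O(z^{-3})$ and the Riccati relation $M_+(n,z)^{-1}=B(n+1)-zI-M_+(n+1,z)$. This relation does hold with the paper's convention $M_+(n,z)=-F_+(n+1,z)F_+(n,z)^{-1}$. Since $M_+(1,\cdot)$ is the $m$-function of the shifted potential, the induction closes.
\item Continuity follows from strong resolvent convergence, the bound $1/\operatorname{Im} z$, and Vitali/Montel.
\end{enumerate}

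Your route is independent of matrix moment theory and Favard-type results. It also correctly identifies the target as $\mathbb H$ with the compact-open topology rather than $\mathcal S_d$: a single value $M_+(0,z)\in\mathcal S_d$ at fixed $z$ cannot determine $\mathbf B_+$. The price is that it uses the uniform bound $\|B(n)\|\le C$ essentially, through compactness, but that is exactly the paper's setting.

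One small bookkeeping point about $\Psi_-$. With the paper's own formula $M_-(0,z)=-F_-(-1,z)F_-(0,z)^{-1}$, the function depends only on $B(n)$ for $n\le -1$, so the matching reflection is $n\mapsto -n$. It does not see $B(0)$, so $\Psi_-$ is not injective on restrictions to $\Z_-=\{\dots,-1,0\}$. Your reflection $n\mapsto 1-n$ corresponds to $M_-(0,z)=-F_-(0,z)F_-(1,z)^{-1}$, which is the convention under which the statement is actually true. You should say explicitly that this is the convention you use, or equivalently take $\mathfrak B_C^-$ to consist of restrictions to $\{\dots,-2,-1\}$.
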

 \begin{proof} In \cite{MM}, it has been proved that there is a one-to-one correspondence between block Jacobi matrices ( or $B_{\pm}$ ) and a Hermitian measure, and a unique Weyl-$m$ function $M_{\pm}$ up to a constant multiple. Also the map $ \Psi_{\pm} $ and its inverse are continuous.     \end{proof}

 Then we state the definition of reflectionless potentials.
\
\begin{defi} Let $A\subset\R$ be a Borel set. We call a potential $ B(n) $ from equation (\ref{ds}) on $\Z$, reflectionless on $A$ if
	\begin{align}\label{ref}
    M_{+}(t)=-\overline{M_{-}(t)},
    \end{align}
    for almost every  $ t\in A$. Here  $M_{\pm}(z)$ are Weyl m functions associated with equation (\ref{ds}) restricted to $\Z_{\pm}.$
\end{defi}
The set of all reflectionless potentials on $A$ is denoted by $\mathcal R(A).$

\begin{theorem}
	\label{RT} Let $\mathbf B $  be a (half line)
	potential, and let $\Sigma_{ac}^d$ be the essential support of the 	absolutely continuous part of the spectral measure with full multiplicity. Then \[
	\omega(\mathbf  B)\subset \mathcal R ( \Sigma_{ac}^d).\] \end{theorem}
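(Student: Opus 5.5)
We would follow Remling's original strategy, transplanted to the matrix setting via the Siegel upper half space $\mathcal S_d$ and the homeomorphism of Proposition \ref{prop 4.1}. The key tool is a matrix version of Breimesser--Pearson convergence of boundary values. For $Z\in\mathcal S_d$ and a Borel set $S\subset\R$, we introduce the matrix harmonic measure
\[
\omega_Z(S)=\frac1\pi\int_S \operatorname{Im}\bigl((t-Z)^{-1}\bigr)\,dt ,
\]
understood via the Cayley transform to the matrix unit ball, or equivalently as $\int \operatorname{Im}(t-Z)^{-1}$ for $Z$ restricted to diagonalizable pieces. If $d=1$ this is just harmonic measure on $\C^+$. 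The statement we aim for is the following: for $\mathbf B$ with half-line $m$ function $M_+(n,\cdot)$, the quantity
\[
\int_A \omega_{M_+(n,t)}(S)\,dt-\int_A \omega_{-\overline{M_-(n,t)}}(S)\,dt
\]
tends to $0$ as $n\to\infty$, for every Borel $A\subset\Sigma_{ac}^d$ of finite measure and every $S$. Here $M_-(n,\cdot)$ is the $m$ function of the left part of $\mathbf B$ (extended by $0$), whose boundary values exist a.e.

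The first step is to establish this relation. At $n=0$, on $\Sigma_{ac}^d$ we have $\operatorname{Im}M_+(t)>0$ a.e., so $M_+(0,t)\in\mathcal S_d$. We then write $M_+(n,\cdot)$ as the image of $M_+(0,\cdot)$ under the transfer matrix $T(n,z)$. This is the symplectic $2d\times2d$ matrix built from $U,V$ of \eqref{is}, which acts on $\mathcal S_d$ by a matrix Möbius map $Z\mapsto (AZ+B)(CZ+D)^{-1}$. The maps $-\overline{M_-(n,t)}$ are obtained from a boundary value at the left end by the same real-$t$ transfer matrices. Since the transfer matrices for real $t$ lie in $Sp(2d,\R)$, they are isometries of the Siegel metric on $\mathcal S_d$. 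The harmonic-measure functional is equivariant under this action, so the pushforward of $\omega$ is preserved. We then follow Remling's argument, a Lebesgue-density / Lusin argument combined with the fact that $M_-(n,t+iy)$ approaches $-\overline{M_+}$-type data in the Siegel distance as $y\downarrow0$. This should show that $M_+(n,t)$ and $-\overline{M_-(n,t)}$ become close in the Siegel metric on a set of nearly full measure in $A$, uniformly in $n$. We expect this to be the main obstacle. The scalar proof uses the explicit Poisson kernel, and one must verify that the matrix harmonic measure is a genuine probability-valued quantity and that the Siegel-metric contraction estimates hold uniformly. I would first try to reduce to the scalar case by testing against vectors: $\langle v,M v\rangle$ is a scalar Herglotz function. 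This reduction fails to commute with the Möbius action, however, so the fallback is to work in the matrix unit ball with the operator-valued Poisson kernel.

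The second step is passage to limits. Take $\mathbf H\in\omega(\mathbf B)$ with $S^{n_j}\mathbf B\to\mathbf H$. By Proposition \ref{prop 4.1}, applied on both half lines, $M_\pm(n_j,z)\to M_\pm^{\mathbf H}(z)$ locally uniformly on $\C^+$. One direction needs care. For the left half-line, convergence in the metric \eqref{metric} controls only finitely many sites, but the left $m$ function depends continuously in that topology because the metric weights far sites little. Next, locally uniform convergence of Herglotz functions implies weak convergence of $\int_A\omega_{M(t)}(S)\,dt$ for $S$ intervals. This is the Breimesser--Pearson type lemma, proved from the Herglotz representation \eqref{eq 2.8} via the Poisson integral.

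The final step is to conclude. The first step gives, in the limit, $\int_A\omega_{M_+^{\mathbf H}(t)}(S)\,dt=\int_A\omega_{-\overline{M_-^{\mathbf H}(t)}}(S)\,dt$ for all intervals $S$ and all $A\subset\Sigma_{ac}^d$. Since $A$ is arbitrary, we can conclude pointwise a.e. equality $\omega_{M_+^{\mathbf H}(t)}=\omega_{-\overline{M_-^{\mathbf H}(t)}}$ as measures. A point $Z\in\overline{\mathcal S_d}$ is determined by its harmonic measure, which follows from the Poisson-type formula with $Z$ recovered through its Borel transform. Hence $M_+^{\mathbf H}(t)=-\overline{M_-^{\mathbf H}(t)}$ a.e. on $\Sigma_{ac}^d$, so $\mathbf H\in\mathcal R(\Sigma_{ac}^d)$ by \eqref{ref}.
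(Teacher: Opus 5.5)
\textbf{Verdict: there is a genuine gap in your first step.} Your outer architecture matches the paper's: a Breimesser--Pearson statement on $\Sigma_{ac}^d$, continuity of $\mathbf B_\pm\mapsto M_\pm$, locally uniform convergence implying convergence in value distribution, then Lebesgue differentiation and recovery of a point from its harmonic measure. The first step carries all the content, and its route does not work.

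Your intermediate claim is that $M_+(n,t)$ and $-\overline{M_-(n,t)}$ become close in the Siegel metric on most of $A$. This is false.
\begin{itemize}
\item For real $t$ only isometries act. The identity $-\overline{P_+(n,t)M}=P_-(n,t)\bigl(-T_+(1,t)\overline M\bigr)$ shows that $M_+(n,t)$ and $-\overline{M_-(n,t)}$ are images of two $n$-independent matrices under the same $d_\infty$-isometry $P_+(n,t)$. So their distance does not depend on $n$ at all.
\item That distance is typically infinite. A finite-segment $m$ function is rational with real symmetric boundary values a.e. With your extension by $0$ the same happens for $t\notin[-2,2]$; take $B(n)\equiv 3I$ and $A\subset(2,5)$. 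Then $-\overline{M_-(n,t)}\in\partial\mathcal S_d$ while $M_+(n,t)\in\mathcal S_d$.
\item The contraction works differently from how you describe it. Nothing happens to $M_-(n,t+iy)$ as $y\downarrow0$. The strict contraction \eqref{di} of $T_-(k,t+iy)$ holds only for fixed $y>0$, and it erases the left boundary data as $n\to\infty$.
\end{itemize}
So no pointwise argument at real $t$ can succeed; a comparison at $y=0$ exists only in the averaged sense.

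\textbf{The missing idea} is how to return from $y>0$ to the real axis without any pointwise control. The order of steps in the paper is as follows.
\begin{enumerate}
\item Partition $A$ so that $d_\infty(M_+(t),M^j)\le\epsilon$ on each $A_j$.
\item Fix $y>0$ using the estimate that is uniform over all Herglotz functions and all $S$ (Lemma \ref{lemma 4.6}, i.e.\ Remling's Lemma A.1): $\bigl|\int_{A_j}\omega_{M(t+iy)}(S)\,dt-\int_{A_j}\omega_{M(t)}(S)\,dt\bigr|\le\epsilon|A_j|$.
\item Only then choose $n_0$, via the contraction, so that $\tilde M_-(n,t+iy)$ is $\epsilon$-close to $P_-(n,t+iy)W^j$. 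Here $P_-(n,t)W^j=-\overline{P_+(n,t)M^j}$, which is $\epsilon$-close to $-\overline{\tilde M_+(n,t)}$ by isometry.
\item Apply the uniform lemma to both Herglotz functions $z\mapsto\tilde M_-(n,z)$ and $z\mapsto P_-(n,z)W^j$. This gives the averaged comparison at $y=0$, uniformly in $n\ge n_0$.
\end{enumerate}
The order of quantifiers ($y$ before $n_0$) and the uniformity over all Herglotz functions are exactly what your sketch lacks.

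\textbf{A secondary problem is the appeal to equivariance.} For $d>1$ there is none to invoke: $Sp(2d,\R)$ acts on real symmetric matrices, not on sets $S\subset\R$. What you actually need is a bound $|\omega_{Z_1}(S)-\omega_{Z_2}(S)|\lesssim d_\infty(Z_1,Z_2)$, uniform in $S$. The paper gets this from the scalar compression you discarded:
\begin{itemize}
\item For $c\neq0$, the map $Z\mapsto c^*Zc$ is $1$-Lipschitz from $(\mathcal S_d,d_\infty)$ to $(\C^+,\rho)$.
\item It satisfies $c^*(-\overline Z)c=-\overline{c^*Zc}$ for symmetric $Z$.
\item Combine these with \eqref{eq 4.10}.
\end{itemize}
The compression never has to commute with the M\"obius action, because all transfer-matrix manipulations are done at the matrix level before compressing. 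For your matrix-valued $\omega_Z$ such a bound does hold, via Cauchy--Schwarz together with $\frac1\pi\int_{\R}\|Y^{1/2}(t-Z)^{-1}c\|^2\,dt=|c|^2$. But you have to prove it rather than infer it from equivariance.
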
 
This theorem is an extension of the Remling's theorem for Jacobi equation from one dimensional space, see \cite{CR1}. This theorem is a consequence of the  Breimesser-Pearson Theorem. Therefore in order to prove the Remling's theorem, we first extend  the Breimesser-Pearson Theorem for the equation \eqref{ds}.\\

The Breimesser-Pearson theorem provides a characterization of the spectrum of these operators, particularly focusing on the conditions under which the spectrum is purely continuous and the absence of eigenvalues. Please refer to the articles \cite{BP1, BP2, Pearson} about this theorem.

First, recall the harmonic measure on the complex upper half plane, for any $z\in \C^+$ and any Borel set $S\subset \R$,  \begin{equation} \label{eq 4.3}\omega_z(S) = \frac{1}{\pi} \int_S \frac{1}{(t-x)^2 +y^2} dt. \end{equation} Notice that $ \omega_z(S)$ satisfies 
\begin{equation} \label{hmp}
\omega_z(S) = \omega_{-\overline{z}}(-S). 
\end{equation} 
Please see \cite{CR2} for more detail about these measures. We  want to define the harmonic measure for a matrix-valued Herglotz functions. For any scalar Herglotz function  
$g$ and $t\in \R $ we define $ \omega_{g(t)}(S) = \lim_{y \rightarrow 0^+} \omega_{g(t+iy)}(S).$ 

\begin{obs} Let  $G(z)$ be a matrix-valued Herglotz function. For any $c \in \C^d$ the map $z \mapsto c^*G(z)c$ is a scalar Herglotz function, and   $$ c^*G(t)c = \lim_{y \rightarrow 0^+} c^*G(t+iy)c.$$ \end{obs}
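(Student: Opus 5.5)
The observation has two claims. First, $z\mapsto c^*G(z)c$ is a scalar Herglotz function. Second, its boundary value equals $c^*G(t)c$, where $G(t)$ denotes the normal limit of $G$.

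\textbf{Herglotz property.} Analyticity is immediate: $c^*G(z)c=\sum_{i,j}\overline{c_i}\,c_j\,G_{ij}(z)$ is a finite linear combination of the analytic entries $G_{ij}$. For the imaginary part, I will use that the adjoint commutes with the quadratic form, $\overline{c^*G(z)c}=c^*G(z)^*c$. This gives
\[
\operatorname{Im}\big(c^*G(z)c\big)=\tfrac{1}{2i}\big(c^*G(z)c-c^*G(z)^*c\big)=c^*\operatorname{Im}G(z)\,c .
\]
Since $\operatorname{Im}G(z)>0$ for $z\in\C^+$, this is strictly positive for every $c\neq 0$. For $c=0$ the function is identically zero, a degenerate Herglotz function, and that case is excluded or treated as trivial. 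So $c^*Gc$ maps $\C^+$ into $\C^+$.

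\textbf{Boundary values.} Here I will use the existence of normal limits $G(t)=\lim_{\varepsilon\downarrow0}G(t+i\varepsilon)$ for almost every $t$, as recalled after \eqref{eq 2.8}; entrywise this is the scalar Fatou theorem applied to each entry. The cleanest route is to take $t$ in the full-measure set where every entry $G_{ij}(t+iy)$ converges.

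For such $t$ the claimed identity follows from linearity: the limit of the finite sum $\sum\overline{c_i}c_jG_{ij}(t+iy)$ is $\sum\overline{c_i}c_jG_{ij}(t)=c^*G(t)c$. Because the quadratic form is a finite sum, no interchange-of-limit issue arises.

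The only subtlety is that the exceptional null set should not depend on $c$. This is already arranged, since the set is defined by convergence of the $d^2$ entries, which does not involve $c$. This matters later, when $c$ ranges over $\C^d$ while the harmonic measure $\omega_{c^*G(t)c}$ is being defined. I will also record the integral form: $c^*\mu c$ is the positive scalar measure in the Herglotz representation \eqref{eq 2.8} of $c^*Gc$. This is consistent with the scalar theory and identifies $\operatorname{Im}\,c^*G(t)c=c^*\operatorname{Im}G(t)c$ almost everywhere.
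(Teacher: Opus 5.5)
Your proof is correct. The paper states this observation without proof, and the two facts you use are exactly what it implicitly relies on: $\operatorname{Im}(c^*G(z)c)=c^*(\operatorname{Im}G(z))c$, and linearity of entrywise normal limits taken off a single null set that does not depend on $c$.

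One small precision concerns the phrase ``scalar Fatou applied to each entry''. The off-diagonal entries $G_{ij}$ need not map $\C^+$ to itself, so that phrase should be read through polarization:
\[
G_{ij}=\tfrac14\sum_{k=0}^{3} i^{-k}\,(e_i+i^ke_j)^*G\,(e_i+i^ke_j).
\]
This writes each entry as a combination of finitely many scalar Herglotz functions, which is your first part. Each of these has a.e. normal limits, and together they give the common exceptional null set you want.
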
 Consequently, for any Borel set $S\subset \R$ we have
$$  \omega_{c^*G(t)c}(S) = \lim_{y \rightarrow 0^+} \omega_{c^*G(t+iy)c}(S).$$ 
Let $\mathbb B(0)= \{ z \in \C : |z| \leq 1 \}$ be the closed unit ball in $\C$.

\begin{defi} Let $ G_n, G \in \mathbb H $ be matrix-valued Herglotz functions. Then we say that

\begin{itemize}

\item $G_n$ converges uniformly to $ G$ on a set $ S \subset \C^+$ if 
$$ \lim_{n\rightarrow \infty} \sup _{z \in S} \| G_n(z) -G(z)\| = 0 ,$$ where $ \|.\|$ is a matrix norm.

\item $G_n$ converges  to $ G$ in value distribution if for any $c\in \mathbb B(0)$ 
\begin{equation}
\lim_{n\rightarrow \infty }  \int_A \omega_{c^*G_n(t)c}(S) dt = \int_A \omega_{c^*G(t)c}(S) dt ,
\end{equation} for all Borel set $A, S \subset \R ,\ \ |A| < \infty .$
\end{itemize}

\end{defi}
The following theorem provides the equivalence statement about the uniform convergence and convergence in value distribution. This is a matrix version of the Thereom 2.1 from \cite{CR1}.

\begin{theorem}\label{thm 4.6}  Suppose $ G_n, G \in \mathbb H$ be matrix-valued Herglotz functions. Then the following are equivalent: 
	\begin{enumerate}   \item $G_n \rightarrow G$ uniformly on compact subsets of $\C^+.$
 \item $G_n \rightarrow G$ in value distribution.\end{enumerate}\end{theorem}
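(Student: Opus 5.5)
The plan is to reduce Theorem \ref{thm 4.6} to the scalar result (Theorem 2.1 of \cite{CR1}) by passing through the scalar Herglotz functions $g_n^c(z)=c^*G_n(z)c$ and $g^c(z)=c^*G(z)c$, for $c\in\mathbb B(0)$, as in the Observation above. Here $c$ is read as a vector in $\C^d$ of norm at most one; for $c=0$ both sides of the value distribution condition are trivial. Each $g_n^c$ is a scalar Herglotz function, since $\operatorname{Im} c^*G_n(z)c=c^*\operatorname{Im}G_n(z)c\ge 0$. If this is zero somewhere, then $g_n^c$ is a real constant and is treated as a degenerate Herglotz function, with $\omega_{a}(S)=\chi_S(a)$; the scalar theorem in \cite{CR1} is stated so as to allow such constants. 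By definition, convergence in value distribution of $G_n$ means exactly that $g_n^c\to g^c$ in value distribution for every $c$.

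\emph{(1)$\Rightarrow$(2).} If $G_n\to G$ uniformly on a compact $K\subset\C^+$, then
\[
\sup_{z\in K}|g_n^c(z)-g^c(z)|\le \|c\|^2\sup_{z\in K}\|G_n(z)-G(z)\|\to0 .
\]
So $g_n^c\to g^c$ locally uniformly. Theorem 2.1 of \cite{CR1} then gives $g_n^c\to g^c$ in value distribution for each fixed $c$, which is (2).

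\emph{(2)$\Rightarrow$(1).} Applying the scalar theorem for each $c$ gives $c^*G_n(z)c\to c^*G(z)c$ locally uniformly on $\C^+$. To recover the full matrix we use polarization. For unit vectors $e_i,e_j$, apply this with $c\in\{e_i,\ \tfrac12(e_i\pm e_j),\ \tfrac12(e_i\pm ie_j)\}$, all of which lie in the allowed set. The identity
\[
e_i^*Xe_j=\sum_{k=0}^{3}i^{k}\,\tfrac14\,(e_i+i^ke_j)^*X(e_i+i^ke_j),
\]
valid for any $X\in\C^{d\times d}$, after rescaling by the factor $4$ from the halved vectors, expresses each entry $(G_n)_{ij}$ as a fixed finite linear combination of the scalar forms. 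Hence every entry converges locally uniformly. Since all matrix norms on $\C^{d\times d}$ are equivalent, this gives (1).

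The main obstacle is making sure the scalar Theorem 2.1 of \cite{CR1} genuinely applies to every $g_n^c$. Two points need checking. First, the degenerate case where $c^*\operatorname{Im}G_n c$ vanishes identically must be covered; I would either cite the version of \cite{CR1} that includes real constants (and $\infty$), or note that in our setting $\operatorname{Im}G_n>0$, so this never happens for $c\neq0$. Second, the definition of $\omega_{g(t)}$ via boundary values must match the one used in \cite{CR1}. The Observation above guarantees that the boundary limits $c^*G(t)c$ exist almost everywhere and coincide with the scalar normal limits, so the two notions of value distribution agree and the reduction is complete.
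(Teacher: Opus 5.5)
Your proof is correct. It follows the paper's strategy for (2)$\Rightarrow$(1) and takes a different route for (1)$\Rightarrow$(2).

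For (2)$\Rightarrow$(1), the paper does what you do: it passes to the scalar Herglotz functions $c^*G_nc$ and invokes the scalar theorem of \cite{CR1}. It stops, however, at convergence of the quadratic forms and never says how the off-diagonal entries of $G_n$ are recovered. Your polarization step supplies exactly that missing piece.

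For (1)$\Rightarrow$(2), the paper argues that locally uniform convergence gives weak convergence of the representing measures $\mu_n\to\mu$ in \eqref{eq 2.8}, and then appeals to ``properties of weak convergence of measures.'' You instead restrict to $c^*G_nc$ and apply the scalar theorem again. Your route is the more reliable one. The quantity $\int_A\omega_{c^*G_n(t)c}(S)\,dt$ depends nonlinearly on boundary values, so weak convergence of $\mu_n$ does not give it directly. One would still need an argument such as convergence at a fixed height $y>0$ combined with the uniform estimate of Lemma \ref{lemma 4.6}. In your reduction, the only matrix-specific input is the elementary equivalence between locally uniform convergence of $G_n$ and of all the forms $c^*G_nc$.

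Two small remarks:
\begin{enumerate}
\item Your reading of $\mathbb B(0)$ as the closed unit ball of $\C^d$ is the correct one. Under the paper's literal definition as a disk in $\C$, $c^*Gc$ would not be a scalar.
\item Your polarization identity, with weight $i^{k}$, actually returns $e_j^*Xe_i$ rather than $e_i^*Xe_j$. Use $i^{-k}$ instead, or swap $i$ and $j$. This is harmless, since all index pairs are covered.
\end{enumerate}
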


\begin{proof} Assume that \( G_n \to G \) uniformly on compact subsets of \( C^+ \). This means that for every compact set \( K \subset C^+ \),
\[
\sup_{z \in K} \| G_n(z) - G(z) \| \to 0 \quad \text{as} \quad n \to \infty.
\]
We need to show that \( G_n \to G \) in value distribution. This requires proving that for all Borel sets \( A, S \subset \mathbb{R} \) with \( |A| < \infty \) and for any $c\in \mathbb{B} (0)$
\[
\lim_{n \to \infty} \int_A \omega_{c^*G_n c}(S) \, dt = \int_A \omega_{c^*G c }(S) \, dt. \]
where \( \omega_{z}(S) \) denotes the harmonic measure associated with \( z \) and \( S \).

Since \( G_n \to G \) uniformly on compact subsets of \( C^+ \), the measures \( \mu_n \)  converge weakly to \( \mu \) where \( \mu_n \) and  \( \mu \) are the measures in the integral representation of \( G_n \) and \( G \) respectively,  from \eqref{eq 2.8}. Therefore, by the properties of weak convergence of measures, we have, for any $c\in \mathbb{B} (0)$
\[
\lim_{n \to \infty} \int_A \omega_{c^*G_n c}(S) \, dt = \int_A \omega_{c^*G c }(S) \, dt. \]
This establishes the convergence in value distribution.

Assume that \( G_n \to G \) in value distribution. This means that for all Borel sets \( A, S \subset \mathbb{R} \) with \( |A| < \infty \), and for any $c\in \mathbb{B} (0)$
\[
\lim_{n \to \infty} \int_A \omega_{c^*G_n c}(S) \, dt = \int_A \omega_{c^*G c }(S) \, dt. \]
We need to show that \( G_n \to G \) uniformly on compact subsets of \( C^+ \).  For any $c \in \C^d$ the map $z \mapsto c^*G(z)c$ is a scalar Herglotz function, likewise so is $c^*G_n(z)c$. Then we      use the fact that convergence in value distribution implies uniform convergence on compact subsets of \( C^+ \) for   scalar Herglotz functions.

By the theorem on convergence of scalar Herglotz functions, convergence in value distribution implies uniform convergence on compact subsets of \( C^+ \). Therefore, \( G_n \to G \) uniformly on compact subsets of \( C^+ \).
 \end{proof}

\begin{lemma} \label{lemma 4.6} Let $A \subset \R$ be a Borel set with $|A| < \infty $. Then  for any $c\in \mathbb B(0)$ 
$$ \lim_{y\rightarrow 0+} \sup_{M\in \mathbb H, \ \ S \subset \R} \Big| \int_A \omega_{c^*M(t+iy)c}(S) dt  - \int_A \omega_{c^*M(t)c}(S) dt \Big| = 0 .$$\end{lemma}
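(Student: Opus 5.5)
The plan is to reduce the statement to the scalar version of this lemma (Lemma 2.2 / the uniform-in-$F$ estimate in \cite{CR1}). By the Observation, for any $c\in\mathbb B(0)$ and $M\in\mathbb H$, the function $F_c(z)=c^*M(z)c$ is a scalar Herglotz function, and its boundary values satisfy $F_c(t)=c^*M(t)c$ for almost every $t$. (If $c^*\operatorname{Im}M\,c$ vanishes identically, $F_c$ is a real constant; this degenerate case is handled directly or by the usual convention for $\omega_{x}$, $x\in\R$.) The supremum over $M\in\mathbb H$ is therefore dominated by the supremum over all scalar Herglotz functions $F$:
\[
\sup_{M\in\mathbb H,\ S\subset\R}\Big|\int_A \omega_{c^*M(t+iy)c}(S)\,dt-\int_A\omega_{c^*M(t)c}(S)\,dt\Big|
\le \sup_{F,\ S}\Big|\int_A \omega_{F(t+iy)}(S)\,dt-\int_A \omega_{F(t)}(S)\,dt\Big| .
\]
So it suffices to show that the right-hand side tends to $0$ as $y\to0^+$, with a bound depending only on $A$ and $y$.

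For the scalar estimate I would follow Remling's argument. Fix $S$ and set $h(z)=\omega_z(S)$. This is a bounded harmonic function on $\C^+$ with values in $[0,1]$. The composition $h\circ F$ is harmonic and bounded, and its boundary value is $\omega_{F(t)}(S)$ for almost every $t$. The Poisson representation then gives
\[
\omega_{F(x+iy)}(S)=\int_\R P_y(x-t)\,\omega_{F(t)}(S)\,dt .
\]
Writing $f(t)=\omega_{F(t)}(S)\in[0,1]$, the quantity to estimate is
\[
\Big|\int_A (P_y*f)-\int_A f\Big|=\Big|\int f\,(P_y*\chi_A-\chi_A)\Big|\le \|P_y*\chi_A-\chi_A\|_{L^1}.
\]
This bound is independent of $F$, of $S$, and hence of $M$ and $c$. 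Since $\chi_A\in L^1$ when $|A|<\infty$, and the Poisson kernel is an approximate identity, $\|P_y*\chi_A-\chi_A\|_{1}\to0$ as $y\to0^+$, which proves the lemma.

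The main point needing care is justifying the Poisson representation of $h\circ F$. I would argue that $h\circ F$ is a bounded harmonic function on $\C^+$, so it is the Poisson integral of its a.e. nontangential boundary values. I would then identify those boundary values with $\omega_{F(t)}(S)$, using the definition $\omega_{F(t)}(S)=\lim_{y\to0^+}\omega_{F(t+iy)}(S)$ from the paper. With this definition the identification is immediate wherever the limit exists, and it exists almost everywhere by Fatou's theorem for bounded harmonic functions. The remaining steps are routine.
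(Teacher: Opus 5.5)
Your proposal is correct and follows the paper's route. The paper also reduces to the scalar Herglotz function $z\mapsto c^*M(z)c$, but then only cites Remling's Lemma A.1 (or Breimesser--Pearson), whose standard proof is the Poisson-integral argument with the bound $\|P_y*\chi_A-\chi_A\|_{L^1}$ that you write out. Your explicit version also makes the bound visibly uniform in $c$, which is what the paper needs later when it chooses $y$ independent of $c$ in the proof of Theorem~\ref{theorem 4.8}.
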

\begin{proof} This follows from the fact that for any $c\in \mathbb B(0), $ 
 the map $ z\mapsto c^*M(t+iy)c$ is a scalar Herglotz function and using the Lemma A.1 from \cite{CR1} or Theorem 1 from \cite{BP2}.\end{proof}
 Let $ \N_- = \{1,\ldots , n\}$ and  on $\N_+ =\{ n+1, n+2,\ldots\}$ and  take the initial condition at $n$. That is, the matrix solutions $U$ and $V$ satisfy the following initial conditions at $n$: 	 
\begin{equation*}
	\begin{aligned}
		&U(n)  =  - I, && V(n) = 0 \\ 
		&U(n+1)= 0,  && V(n+1) = I\,.
	\end{aligned}
\end{equation*}
For any $z\in \C^+, $ define the Weyl $m$ functions on $\N_-$ and $\N_+$ by	
\begin{equation}\label{ss}
	F_{\pm}(n,z) = U(n,z) \pm  \tilde M_{\pm}(n,z) V(n,z),
\end{equation}
where we require that $F_+\in\ell^2(\N,\C^{d \times d})$ and we take $F_-(0) = 0$ for normalization of the matrix  solutions to equation \eqref{ds}. Notice that the columns of $F_{\pm}(n,z)$ form a linearly independent set of solutions to equation (\ref{ds}) and therefore for each $n$, $F_{\pm}(n)$ is invertible.  For $z\in\C^+$, these half-line $m$ functions are uniquely determined and are given by:
\begin{equation} \label{mf}
	\tilde M_+(n,z) = - F_+(n+1,z)F_+(n,z)^{-1},\ \  \tilde M_-(n,z)  =  F_-(n+1,z)F_-(n,z)^{-1}\,.
\end{equation}
These $m$ functions are related to the one defined in \eqref{wmf}  by the following relation $$M_+(N,z) = \tilde M_+(N,z), \ \ M_{-}(N,z) = B(n) -zI + \tilde  M_{-}(N,z) .$$ 

\begin{theorem} \label{theorem 4.8}  Let $\Sigma_{ac}^d$ denotes the essential support with full multiplicity of absolutely continuous part of the spectral measure of the half-line problem \eqref{ds}. Then for any $A \subset \Sigma_{ac}^d, \ |A| < \infty $ and $S \subset \R,$ we have 
\begin{equation*}
\lim_{N\rightarrow \infty}\Big ( \int_A \omega_{c^* \tilde M_-(N,t)c} (-S) dt -\int_A \omega_{c^* \tilde M_+(N,t)c}(S) dt\ \Big ) = 0,
\end{equation*} 
for all $c\in \mathbb B(0).$ Moreover, the convergence is uniform in $S .$\end{theorem}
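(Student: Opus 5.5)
We follow Breimesser--Pearson and Remling's scalar proof in \cite{CR1}, using the transfer matrix to carry the statement from $N=0$ up to large $N$.

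The key structural fact is this. For $z\in\C^+$, the matrix solutions $U,V$ with initial data at $0$ form the transfer matrix $T(N,z)$. This matrix maps the $m$-functions at $0$ to those at $N$ by a matrix M\"obius (symplectic) action,
\[
\tilde M_{\pm}(N,z)=T(N,z)\cdot \tilde M_{\pm}(0,z),
\]
where the sign convention on $\tilde M_-$ in \eqref{mf} is exactly what makes both $\tilde M_+$ and $-\overline{\tilde M_-}$ transform by the same action when $t$ is real. Since $B(n)$ is real symmetric, $T(N,t)$ is real symplectic for real $t$. Such a matrix is an automorphism of the Siegel half space $\mathcal S_d$, and it commutes with the reflection $Z\mapsto -\overline Z$.

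\textbf{Step 1: a contraction estimate.} Work at a fixed height $y>0$, where everything is analytic. Using \eqref{hmp}, $\omega_{c^*\tilde M_-c}(-S)=\omega_{-c^*\overline{\tilde M_-}c}(S)$, so the integrand is a difference of harmonic measures of $S$ evaluated at two points of $\C^+$. Write $p_\pm(t)$ for the scalar Herglotz values $c^*\tilde M_+(N,t+iy)c$ and $-c^*\overline{\tilde M_-(N,t+iy)}c$. A harmonic measure $\omega_\zeta(S)$ is Lipschitz in $\zeta$ with respect to the hyperbolic metric, uniformly in $S$. It therefore suffices to show that the hyperbolic distance between $p_+(t)$ and $p_-(t)$ becomes small in measure on $A$. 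By Schwarz--Pick, this scalar distance is bounded by the Siegel (matrix) hyperbolic distance between $\tilde M_+(N)$ and $-\overline{\tilde M_-(N)}$. Real symplectic maps are isometries of that distance, but the maps $T(N,t+iy)$ with $y>0$ are strict contractions. Pulling back to $N=0$ shows that the distance at $N$ is controlled by the distance at $0$ between $\tilde M_+(0,t+iy)$ and $-\overline{\tilde M_-(0,t+iy)}$. Equivalently, and following Remling more closely, we compare $T(N,t)$ acting on the boundary values $\tilde M_\pm(0,t)$.

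\textbf{Step 2: using $A\subset\Sigma_{ac}^d$.} On $\Sigma_{ac}^d$ we have $\operatorname{Im}\tilde M_+(0,t)>0$ a.e., so $\tilde M_+(0,t)$ is an interior point of $\mathcal S_d$. Interior points are exactly where the matrix analogue of Remling's argument applies.

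\textbf{Step 3: reduction to fixed height.} To handle the $y\to0$ limit, we use Lemma \ref{lemma 4.6}. It makes the replacement of boundary values by values at height $y$ uniform over all Herglotz functions and all $S$. Hence it suffices to prove, for each fixed small $y$, that the limit in $N$ of the height-$y$ quantity is small. The key input is then the matrix version of Remling's estimate. Let $T(N,t)$ act on the fixed interior point $\tilde M_+(0,t)$, and compare it with the value at height $y$, which is obtained from the half-line problem on $[N,\infty)$ or $[0,N]$. The two differ by a quantity whose $A$-integral tends to zero as $N\to\infty$ and then $y\to0$.

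\textbf{Main obstacle.} Remling proves this by comparing $\omega$-values through the invariance $\int_A\omega_{T\cdot w}(S)\,dt$ under $\mathrm{SL}_2(\R)$ together with an $L^1$ approximation of $\tilde M_+(0,t)$ on $A$ by step functions. We will do the same for $c^*(\cdot)c$. We expect this to be the main obstacle. In the matrix case, $c^*(T\cdot W)c$ is not a M\"obius image of $c^*Wc$, so the scalar harmonic-measure invariance fails. The plan is to work throughout with the Siegel metric, as in Step 1, and to project to the scalar level only at the end via the Schwarz--Pick contraction. This yields uniformity in $S$ and in $c\in\mathbb B(0)$, since $|c|\le1$ keeps the projections Lipschitz with respect to the Siegel metric.
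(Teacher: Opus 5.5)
There is a genuine gap. The mechanism in Step~1 cannot work, and Step~3 only restates the theorem.

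\textbf{Why Step~1 fails.} At height $y>0$ no common map transports $\tilde M_+$ and $-\overline{\tilde M_-}$. From $\operatorname{diag}(I,-I)T_+=T_-\operatorname{diag}(I,-I)$ and reality of $B$ one gets $-\overline{\tilde M_-(n,z)}=T_+(n,\bar z)\big(-\overline{\tilde M_-(n-1,z)}\big)$. On the other hand, $\tilde M_+(n,z)=T_+(n,z)\tilde M_+(n-1,z)$. The maps $T_+(n,\bar z)$ contract $d_\infty$, but the $T_+(n,z)$ need not even map $\mathcal S_d$ into itself. So ``pulling back to $N=0$'' gives no bound. Even if it did, the distance at $N=0$ is not small, since a half-line problem is not reflectionless at any finite $N$.

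The fixed-height smallness you aim for is in fact false. Take $d=1$, $B\equiv 0$, and $\lambda+\lambda^{-1}=z$ with $|\lambda|<1$. Then $\tilde M_+(N,z)=-\lambda$ for all $N$, while $-\overline{\tilde M_-(N,z)}\to-\lambda/|\lambda|^2\neq-\lambda$.

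The boundary variant (``compare $T(N,t)$ acting on the boundary values'') fails as well. For a.e.\ real $t$, $\tilde M_-(N,t)$ is a real symmetric matrix, because $F_-$ is a real matrix polynomial in $z$. So pointwise the two boundary points are infinitely far apart for every $N$. The theorem is a statement about value distributions only. Your Step~3 sentence ``the two differ by a quantity whose $A$-integral tends to zero'' is that statement, assumed rather than proved.

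\textbf{The missing idea.} What you need is the asymmetric Breimesser--Pearson/Remling scheme that the paper follows.
\begin{enumerate}
\item Use $A\subset\Sigma_{ac}^d$ to split $A=A_0\cup A_1\cup\dots$ with $|A_0|<\epsilon$, and with $d_\infty(\tilde M_+(0,t),M^j)\le\epsilon$ on $A_j$ for \emph{constant} $M^j\in\mathcal S_d$.
\item Never move the $+$ side off the real axis. There $P_+(n,t)$ is a real symplectic isometry, so $\tilde M_+(n,t)$ stays $\epsilon$-close to $P_+(n,t)M^j$ for every $n$.
\item Reflect. For real $t$, $-\overline{P_+(n,t)M^j}=T_-(n,t)\cdots T_-(1,t)(-\overline{M^j})$, which is the paper's $P_-(n,t)W^j$. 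The starting point depends on neither $n$ nor $t$, so $z\mapsto T_-(n,z)\cdots T_-(1,z)(-\overline{M^j})$ is a matrix Herglotz function: a left $m$-function with another boundary condition.
\item Only now go up to height $y$. By \eqref{di}, this function and $\tilde M_-(n,z)$ become $d_\infty$-close uniformly on compacts as $n\to\infty$; the left $m$-function forgets its initial condition.
\item Lemma \ref{lemma 4.6} is uniform over \emph{all} Herglotz functions. It brings both $c^*\tilde M_-(n,\cdot)c$ and $c^*(\cdot)c$ of the comparison function back to the boundary.
\end{enumerate}
The step-function approximation is exactly what makes the comparison function Herglotz in $z$. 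With the $t$-dependent point $-\overline{\tilde M_+(0,t)}$ it would not be, and Lemma \ref{lemma 4.6} could not be applied.

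The obstacle you single out is harmless. One only ever compares points at small $d_\infty$-distance and projects with $d_\infty(c^*M_1c,c^*M_2c)\le d_\infty(M_1,M_2)$. So no invariance of harmonic measure under the matrix M\"obius action is needed.
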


To prove this theorem, we will follow similar techniques as in one-dimensional case, see \cite{CR1}, and as for the canonical system, see \cite{KA1}. However, the $m$ functions are matrix-valued Herglotz functions and are the matrices in $\mathcal S_d .$ The space $\mathcal S_d$ can be considered as a generalization of the complex upper half plane.   

We also want to consider a metric on  $\mathcal S_d$ as a generalization of the hyperbolic metric 
\begin{equation}\label{hm}
	ds = \frac{\sqrt{dx^2 +dy^2}}{y}
\end{equation}
on the complex hyperbolic plane $\C^+.$ Define the following   map on $\mathcal{S}_d$, $$d_\infty:\mathcal{S}_d\times\mathcal{S}_d\to\R$$ via	
\begin{equation}\label{ms}
	d_{\infty}(Z_1, Z_2) = \inf_{Z(t)} \int _0^1 F_{Z(t)}(\dot{Z}(t))\,dt\,, \quad Z_1,  Z_2\in\mathcal{S}_d
\end{equation}
where
\begin{equation}\label{norm} 
	F_Z(W) = \|Y^{-1/2}WY^{-1/2}\|,
\end{equation} 
called the Finsler norm, and the infimum is taken over all differentiable paths $Z(t)$ joining $Z_1$ to $Z_2$. Here $Y$ is a positive definite matrix and possesses the square root and $Y^{-1/2} = (Y^{1/2})^{-1} = (Y^{-1})^{1/2}$.  The norm in equation (\ref{norm}) is the operator norm of matrices acting on $\C^d$. A calculation shows that $d_\infty$ is a metric on $\mathcal{S}_d$ and is called the Finsler metric. Hence $(\mathcal{S}_d, d_\infty)$ is a metric space. In the case when $d=1$, the Siegel upper half space $\mathcal S_1 = \C^+$ and the metric $d_{\infty}$ is same as the the hyperbolic metric defined in equation (\ref{hm}) on $\C^+$.  The length of a curve in the Euclidean plane is measured by $\sqrt{dx^2 +dy^2}$.   Similarly, the length of a curve $\gamma(t) = x(t) +i y(t)$, $t\in [0,1]$ on the hyperbolic plane $\C^+$ is defined by
\begin{equation*}
	h(\gamma) = \int _0^1 \frac{1}{y(t)}\sqrt{\left(\frac{dx}{dt}\right)^2 + \left(\frac{dy}{dt}\right)^2}\ dt.
\end{equation*}
The hyperbolic distance between two points $z, w \in C^+$ is defined by 

\begin{equation*}
	\rho(z,w) = \inf h(\gamma)
\end{equation*}
where the infimum is taken over all piecewise differentiable curves joining $z$ and $w$.  So for  $d=1$,  we have $d_{\infty}(z, w)= \rho (z, w)$.  Thus the Finsler metric $d_{\infty}$ on $\mathcal S_d$ is a generalization of the hyperbolic metric $\rho$ on $\C^+$. Another notion of distance on $\C^+$ is a pseudo hyperbolic distance which is closely connected with harmonic measure. As in \cite{CR1}, the pseudohyperbolic distance of two points $z, w \in \C^+$ is defined by  \[ \gamma(w,z)
= \frac{|w-z|}{\sqrt{ \text{Im}w}\sqrt{\text{Im}z}}.\] 

The hyperbolic metric $\rho $ and the pseudohyperbolic distance $\gamma$ are related by the following equation (see \cite{BP2})
\begin{equation}\gamma(w,z) =2 \sinh(\rho(z, w)) .\end{equation}

In \cite {CR1}, it is shown that hyperbolic distance and harmonic measure satisfy,
\begin{equation}\label{eq 4.10}|\omega_{w}(S)-\omega_{z}(S)|\leq \gamma(w,z)\end{equation} for any $z,w \in \C^+ $ and any Borel set $S\subset\R.$ 
 From \eqref{eq 4.10}, for any matrix-valued Herglotz functions $ M_1 , M_2 \in \mathbb H$ we have
 \begin{equation} \label{eq 4.11}
 |\omega_{c^* M_1 c}(S)-\omega_{c^*M_2c}(S)|\leq \gamma(c^*M_1c,c^*M_2 c),
 \end{equation} for any $c \in \mathbb B(0)$ and any Borel set $S\subset\R.$ 
 
 One important observation about the matrices is the following inequality
 
 \begin{equation} \label{eq 4.11'}
 d_{\infty}(c^*M_1c,c^*M_2 c) \leq d_{\infty}(M_1, M_2 ),
 \end{equation} 
 for any $c \in \mathbb B(0)$ and any   $ M_1, M_2 \in \mathbb H.$

Another important tool that we need here is the fractional linear transformation.  For any $S\in M_{2d }(\C)$ of the form:
\begin{equation*}
	S= \begin{pmatrix} A & B \\ C & D \end{pmatrix}
\end{equation*}
where are $A$, $B$, $C$, and $D$ are $d\times d$ matrices, a {\it matrix-valued fractional   transformation} is a map $S: M_{d }(\C) \rightarrow  M_{d }(\C)$ defined by
\begin{equation}\label{flt}
	S(Z) = (AZ + B) (CZ +D)^{-1}\,,
\end{equation}
for all $Z \in M_{d }(\C) $ for which $S$ is well defined. Let $ J= \begin{pmatrix}0 & -I \\ I & 0 \end{pmatrix}.$  In \cite{KM}, it is shown that the  restriction of the map \eqref{flt} on $\mathcal S_d$ is well-defined if $S$ satisfies $i(S^*JS-J) \geq 0 .$  We now extract such matrices from \eqref{ds} that act as a matrix-valued fractional transformation.
Suppose $F(n)$ is a matrix solution to the equation (\ref{ds}). Then we have 
\begin{equation}\label{tm}
	\begin{bmatrix} F(n+1) \\ \mp F(n) \end{bmatrix} = \begin{pmatrix} z I -B(n) & \pm I \\ \mp I  & 0 \end{pmatrix} \begin{bmatrix} F(n) \\ \mp F(n-1) \end{bmatrix}\,
.\end{equation}
   The matrices 
\begin{equation}\label{tm1} 
	T_{\pm}(n,z) = \begin{pmatrix} z I -B(n) & \pm I \\ \mp I  & 0 \end{pmatrix}
\end{equation}
given in equation (\ref{tm}) are called {\it transfer matrices} which describe the evolution of the vectors
\begin{equation*}
	\begin{bmatrix} F(n+1) \\ \mp F(n) \end{bmatrix}
\end{equation*}
under iteration of $T_{\pm}(n,z)$. Notice that  
     
 \begin{equation}\label{tm2} \begin{pmatrix} I & 0 \\ 0 & -I \end{pmatrix}	T_{+}(n,z)  = T_{-}(n,z)   \begin{pmatrix} I & 0 \\ 0 & -I \end{pmatrix}.\end{equation}

These transfer matrices $T_{\pm}(n,z)$ can be considered as complex matrix-valued fractional transformations \eqref{flt} acting on the space of $m$ functions as follows:
\begin{equation*}
	T_{\pm}(n,z)\tilde M_{\pm}(n,z) = \mp \Big( (z I -B(n)) - \tilde M_{\pm}(n,z))^{-1} \Big).
\end{equation*}
If $B(n)$ is real and symmetric, these transfer matrices satisfy the following symplectic identity:
\begin{lemma} The transfer matrices $ T_{\pm}(n, z) $ satisfy the following symplectic identity: $$ T_{\pm}(n, z)^\top J T_{\pm}(n,z ) = J .$$ \end{lemma}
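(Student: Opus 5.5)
The identity will follow from a direct block-matrix multiplication. The only structural input needed from the hypotheses is that $B(n)$ is real symmetric, so that $B(n)^\top = B(n)$. Note that the identity involves the plain transpose $T^\top$, not the adjoint, so $z$ is never conjugated and no restriction on $z$ is needed. To treat both signs at once, I would write $s=\pm 1$, set $A = zI - B(n)$, and record
\[
T_{\pm}(n,z) = \begin{pmatrix} A & sI \\ -sI & 0 \end{pmatrix}, \qquad J=\begin{pmatrix}0 & -I \\ I & 0 \end{pmatrix}.
\]

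First I compute $JT_{\pm}$ blockwise. Multiplying out gives
\[
JT_{\pm}(n,z)=\begin{pmatrix} sI & 0 \\ A & sI \end{pmatrix}.
\]
Next I form the transpose,
\[
T_{\pm}(n,z)^\top=\begin{pmatrix} A^\top & -sI \\ sI & 0\end{pmatrix}.
\]
Here I use $A^\top = zI - B(n)^\top = A$, which is exactly where the symmetry of $B(n)$ enters.

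Multiplying these two matrices, the four blocks of $T_{\pm}^\top (JT_{\pm})$ are as follows. The upper-left block is $sA - sA = 0$. The upper-right block is $-s^2 I = -I$. The lower-left block is $s^2 I = I$. The lower-right block is $0$. Hence $T_{\pm}^\top J T_{\pm} = J$.

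There is no real obstacle in this argument. The only points needing care are tracking the sign $s$ consistently, using $s^2=1$, and observing that the cancellation in the upper-left block relies on $A^\top=A$. That cancellation would fail for a non-symmetric potential. It would also fail if one replaced $\top$ by $*$ with $z\notin\R$, which is why the lemma is stated with the transpose.

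As a consistency check, I would verify one sign directly. Alternatively, I could derive the $T_-$ case from the $T_+$ case via \eqref{tm2}, using that $D=\operatorname{diag}(I,-I)$ satisfies $D^\top J D = -J$ and $D^{-1}=D$.
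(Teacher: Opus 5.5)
Your proof is correct and is essentially the paper's argument: a direct block multiplication in which the upper-left block cancels because $B(n)^\top = B(n)$. Your sign parameter $s$ handles $T_+$ and $T_-$ at once. The paper instead computes $T_+$ explicitly (writing $J$ with the opposite sign, which is harmless since the identity is invariant under $J\mapsto -J$) and says the $T_-$ case is similar.
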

\begin{proof} We verify for $T_+(n,z)$
\begin{align*} T_+(n,z)^\top J T_+(n,z)  & = \begin{pmatrix} z I -B(n) &   I \\ - I  & 0 \end{pmatrix}^\top \begin{pmatrix} 0 & I \\ -I & 0 \end{pmatrix} \begin{pmatrix} z I -B(n) &   I \\ - I  & 0 \end{pmatrix}  \\ & =
\begin{pmatrix} z I -B(n) &   -I \\  I  & 0 \end{pmatrix} \begin{pmatrix}-I &   0 \\  -zI+B(n)  & -I \end{pmatrix} \\ & = J\end{align*} 
Similarly, it holds for $T_-(n,z).$
\end{proof}
Also if $z\in \R,\ \ T_{\pm}(n, z)$ are real symplectic matrices which means that  $ T_{\pm}(n, z) \in \operatorname{Aut}{(\mathcal S_d)}$ meaning that they are bijective holomorphic functions, and moreover, the action is distance preserving with respect to the metric \eqref{ms}, see \cite{KM1}. That is, for $ T = T_{\pm}(n, z) \in SL(2d, \R) $ and  $W_1, W_2 \in \mathcal S_d $ we have 
 		\begin{align} \label{dp2} d_{\infty} \Big(T( W_1), T( W_2 )\Big) = d_{\infty}( W_1, W_2) \end{align}   

and for $z\in \C^+, \ \ T_{+}(n, z) $ does not map $\mathcal S_d  $ to itself in general.
In \cite{KM}, it is shown that $$ \tilde M_{\pm}(n,z) = 	T_{\pm}(n,z)\tilde M_{\pm}(n-1,z) ,$$ which implies $$\tilde M_+ (n,z) =T_{+}(n,z)T_{+}(n-1,z) \dots T_{+}(1,z) \tilde M_+(0,z) .$$  and $$ \tilde M_- (n,z) =T_{-}(n,z)T_{-}(n-1,z) \dots T_{-}(2,z) \tilde M_-(1,z) .$$
 Introduce $$P_{+}(n,z):= T_{+}(n,z)T_{\pm}(n-1,z) \dots T_{+}(1,z).$$ and $$ P_-(n,z) := T_{-}(n,z)T_{-}(n-1,z) \dots T_{-}(2,z).$$ Thus  \begin{equation} \label{eq 4.14}  \tilde  M_+ (n,z) =P_{+}(n,z) \tilde M_+(0,z), \    \ \tilde M_- (n,z) =P_{-}(n,z) \tilde M_-(1,z)    
 .\end{equation}
\begin{lemma} \label{lemma4.9}The matrices $P_{+}(n,z) $ and $P_{-}(n,z)$ satisfy the following relation
\begin{equation}\begin{pmatrix} I & 0 \\ 0 & -I \end{pmatrix}	P_{+}(n,z)  = P_{-}(n,z)  \begin{pmatrix} I & 0 \\ 0 & -I \end{pmatrix} T_{+}(1,z)  .\end{equation}\end{lemma}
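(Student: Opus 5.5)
We will prove the identity by induction on $n$, moving the diagonal matrix $D:=\begin{pmatrix} I & 0 \\ 0 & -I \end{pmatrix}$ from left to right through the product, one factor at a time. The only input needed is the intertwining relation \eqref{tm2}, $D\,T_+(k,z)=T_-(k,z)\,D$, which holds for every $k$ and every $z$. We read the definition of $P_+(n,z)$ as $T_+(n,z)T_+(n-1,z)\cdots T_+(1,z)$; the ``$T_\pm(n-1,z)$'' written there is a typo for $T_+(n-1,z)$. We also use $P_-(n,z)=T_-(n,z)\cdots T_-(2,z)$, with the empty product equal to the identity when $n=1$.

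For the base case $n=1$, we have $P_+(1,z)=T_+(1,z)$ and $P_-(1,z)=I$. The claim then reads $D\,T_+(1,z)=D\,T_+(1,z)$, which is trivially true. (If one prefers to start at $n=2$, the claim is $D\,T_+(2)T_+(1)=T_-(2)\,D\,T_+(1)$, which is exactly \eqref{tm2} with $k=2$, multiplied on the right by $T_+(1,z)$.)

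For the inductive step, assume $D\,P_+(n-1,z)=P_-(n-1,z)\,D\,T_+(1,z)$. Since $P_+(n,z)=T_+(n,z)P_+(n-1,z)$, we compute
\begin{align*}
D\,P_+(n,z) &= D\,T_+(n,z)\,P_+(n-1,z) = T_-(n,z)\,D\,P_+(n-1,z)\\
&= T_-(n,z)\,P_-(n-1,z)\,D\,T_+(1,z) = P_-(n,z)\,D\,T_+(1,z).
\end{align*}
The second equality is \eqref{tm2} with $k=n$, the third is the induction hypothesis, and the last is the definition of $P_-(n,z)$.

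There is no genuine obstacle here. The only care needed is the indexing: the product $P_-$ starts at $T_-(2,z)$ rather than $T_-(1,z)$, and this is precisely why one uncommuted factor $T_+(1,z)$ remains on the right-hand side. Alternatively, one can iterate \eqref{tm2} directly $n-1$ times instead of phrasing the argument as an induction.
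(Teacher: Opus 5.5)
Your proof is correct and matches the paper's argument: the paper simply says the identity follows from \eqref{tm2}, and your induction is that iteration written out. It commutes $D$ past each factor $T_+(k,z)$ with $k\ge 2$ and leaves $T_+(1,z)$ on the right. Reading the $T_\pm(n-1,z)$ in the definition of $P_+(n,z)$ as a typo for $T_+(n-1,z)$, and taking $P_-(1,z)=I$ as the empty product, are both sensible.
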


\begin{proof} It follows from \eqref{tm2}\end{proof}
 We note that for any real symmetric potential $B(n)$ in \eqref{ds} and $z\in \C^+,$ the transfer matrix $ T_{-}(n,z) $ can be written as     
    \begin{align*} T_{-}(n,z) =  \begin{pmatrix}  I  &   -B(n) \\ 0 &  I  \end{pmatrix} \begin{pmatrix} I &  zI \\ 0  & I \end{pmatrix}  \begin{pmatrix} 0 & -I \\ I  & 0 \end{pmatrix} = T_B  T_z T_J  .\end{align*} The matrices $ T_J,  T_B    \in SL(2d, \R) $, so $ T_J(Z), T_B(W) \in \mathcal S_d $ for any $ Z,  W \in \mathcal S_d .$ We now show that $T_z(Z) \in \mathcal S_d$ for any $ Z\in \mathcal S_d$. But this is also clear because $T_z (Z) =Z +zI $ which is in $\mathcal S_d .$ The following lemma from \cite{KM} shows the distance decreasing property of the action of $T_-(n,z)$ on $\mathcal S_d$. For completeness of the paper, we present the proof.

\begin{lemma} For any $n\in \N, \ \ z =x+iy \in \C^+$ and $ W_1, W_2 \in \mathcal S_d,$
\begin{equation}\label{di}
d_{\infty}(T_-(n, z)W_1, T_-(n, z)W_2 )  \leq  \frac{1}{1+y^2}	d_{\infty}( W_1,  W_2) .\end{equation}
\end{lemma}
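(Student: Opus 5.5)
The plan is to use the factorization $T_-(n,z)=T_B\,T_z\,T_J$ established just before the statement. This reduces the estimate to a single contraction property of the translation $T_z(Z)=Z+zI$. By the result quoted from \cite{KM1} (equation \eqref{dp2}), the real symplectic matrices $T_B$ and $T_J$ act on $\mathcal S_d$ as $d_\infty$-isometries. Hence
\[
d_\infty\big(T_-(n,z)W_1,T_-(n,z)W_2\big)=d_\infty\big(T_z(T_JW_1),T_z(T_JW_2)\big),
\qquad d_\infty(T_JW_1,T_JW_2)=d_\infty(W_1,W_2).
\]
It therefore suffices to show that $d_\infty(Z_1+zI,Z_2+zI)\le \frac{1}{1+y^2}\,d_\infty(Z_1,Z_2)$ for $Z_j=T_JW_j=-W_j^{-1}\in\mathcal S_d$.

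For the translation step I would work path by path with the definition \eqref{ms}. Take any differentiable path $Z(t)=X(t)+iY(t)$ in $\mathcal S_d$ joining $Z_1$ to $Z_2$. Its image $Z(t)+zI$ joins the translated endpoints, has the same derivative $\dot Z(t)$, and has imaginary part $Y(t)+yI$. Writing $A=\dot Z$, the Finsler norm \eqref{norm} gives
\[
F_{Z+zI}(A)=\big\|(Y+y)^{-1/2}Y^{1/2}\,\big(Y^{-1/2}AY^{-1/2}\big)\,Y^{1/2}(Y+y)^{-1/2}\big\|\le \big\|Y^{1/2}(Y+y)^{-1/2}\big\|^2\,F_Z(A).
\]
Since $Y$ and $Y+yI$ commute, $\|Y^{1/2}(Y+y)^{-1/2}\|^2=\max_{\lambda\in\sigma(Y)}\frac{\lambda}{\lambda+y}$. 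Integrating over $t\in[0,1]$ and taking the infimum over paths then gives the contraction, provided this spectral factor is controlled by $\frac1{1+y^2}$ along the path.

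\textbf{Main obstacle.} The factor $\frac{\lambda}{\lambda+y}$ is $\le\frac1{1+y^2}$ only when $\lambda\le 1/y$. As $\lambda\to\infty$ the factor tends to $1$. So the pathwise estimate does not give a constant uniform over all of $\mathcal S_d$, and the heart of the proof is to bound the spectrum of $\operatorname{Im}$ along the relevant paths.

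To handle this, I would first exploit that the translation is applied after $T_J$, so that $Y=\operatorname{Im}(-W^{-1})$. I would then try to choose near-geodesic paths between $-W_1^{-1}$ and $-W_2^{-1}$ on which $\|Y(t)\|\le 1/y$, for instance by conjugating by a suitable real symplectic isometry before translating.

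If that fails, the fallback is to check the scalar case $d=1$ on $\C^+$ first. By \eqref{eq 4.11'} the matrix estimate reduces to the diagonal scalar pieces $c^*Zc$. I would verify there whether the constant $\frac1{1+y^2}$ is actually attained. If it is not, the statement would have to be weakened to a constant depending on a bound for $\operatorname{Im}W_j$.
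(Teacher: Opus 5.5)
Your proposal stops exactly at the decisive step. The bound $\|Y(t)\|\le 1/y$ along (near-)minimizing paths is never obtained, and your first remedy cannot obtain it, because the bound already fails at the endpoints, which no choice of path can change. For $W_j=i\epsilon I$ one has $U_j=-W_j^{-1}=i\epsilon^{-1}I$. Applying a symplectic map before translating does not help either, since it changes the map being estimated.

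Your fallback is the right instinct, and carrying it out shows that the inequality is false as stated. Take $z=i$, so the claimed constant is $\tfrac{1}{2}$. Since $Z\mapsto Z-B(n)$ is an isometry, $B(n)$ plays no role. For $W_1=\tfrac{i}{2}I$ and $W_2=iI$ one gets $T_-(n,i)W_1=3iI-B(n)$ and $T_-(n,i)W_2=2iI-B(n)$, and
\[
d_\infty(W_1,W_2)\le\ln 2,\qquad d_\infty\bigl(T_-(n,i)W_1,T_-(n,i)W_2\bigr)=d_\infty(3iI,2iI)\ge\rho(3i,2i)=\ln\tfrac{3}{2}>\tfrac{1}{2}\ln 2 .
\]
The upper bound uses the path $isI$, $\tfrac{1}{2}\le s\le 1$. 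The lower bound uses the projection inequality $d_\infty(c^*M_1c,c^*M_2c)\le d_\infty(M_1,M_2)$ with $c=e_1$. That inequality runs the wrong way for your proposed reduction of the matrix estimate to scalar pieces, but it does transport scalar counterexamples to every $d$. More generally, for $d=1$ the infinitesimal distortion of $w\mapsto z-b-1/w$ at $w$ is $\operatorname{Im}w/(y|w|^2+\operatorname{Im}w)$. This tends to $1$ as $w\to 0$, so no constant below $1$ works on all of $\mathcal S_d$.

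The paper's proof uses the same factorization and the same pathwise estimate you wrote down. It gets past your obstacle only by asserting that $W_j=T_-(0,z)Z_j$ and that the whole connecting path has the form $T_-(0,z)Z(t)$. This yields $\operatorname{Im}W(t)>yI$ and hence $\operatorname{Im}U(t)\le(\operatorname{Im}W(t))^{-1}<\frac{1}{y}I$. That is an extra hypothesis, not a consequence of $W_1,W_2\in\mathcal S_d$, and it is precisely the gap you located.

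Built into the statement, it makes your argument work: for all $Z_1,Z_2\in\mathcal S_d$ and all $m,n$,
\[
d_\infty\bigl(T_-(n,z)T_-(m,z)Z_1,\,T_-(n,z)T_-(m,z)Z_2\bigr)\le\frac{1}{1+y^2}\,d_\infty(Z_1,Z_2).
\]
To see this, push any path $Z(t)$ from $Z_1$ to $Z_2$ forward by $T_-(m,z)$. This does not increase Finsler length, since real symplectic maps preserve the Finsler norm pointwise and $\lambda/(\lambda+y)\le 1$. The image path $W(t)=zI-B(m)-Z(t)^{-1}$ has $\operatorname{Im}W(t)>yI$. Moreover $\operatorname{Im}(-W^{-1})=W^{-1}(\operatorname{Im}W)\overline{W}^{-1}\le(\operatorname{Im}W)^{-1}$, because $W(\operatorname{Im}W)^{-1}\overline{W}=XY^{-1}X+Y\ge Y$. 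So your spectral factor is below $\frac{1}{1+y^2}$ along the next translation.

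Iterating yields $(1+y^2)^{-(n-2)}$ in place of $(1+y^2)^{-(n-1)}$ in the subsequent estimate for $d_\infty(\tilde M_-(n,z),P_-(n,z)W)$. That is all the later argument needs.
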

\begin{proof}  Write $T_{-}(n,z) =  T_B  T_z T_J$. Since $T_B,  T_J  \in SL(2d, \R) $, by \eqref{dp2}, we get 
	\begin{align} \label{2.271} \nonumber d_{\infty} \Big(T_{-}(n,z)( W_1), T_{-}(n,z)( W_2 )\Big) & = d_{\infty} \Big(T_B  T_z T_J( W_1), T_B  T_z T_J( W_2 )\Big) \\ & = d_{\infty} \Big( T_z T_J( W_1), T_z T_J( W_2 )\Big). \end{align} Let $U_j = T_J(W_j) = JW_j$, then $T_z(U_j) = U_j +zI, \, j =1,2.$  Let $U(t)$ be a length minimizing path between $U_1$ and $U_2.$ Then $U(t)+zI$ be a path between $U_1 +zI$ and  $U_2 +zI$, then $\frac{d}{dt} (U(t) +zI) = \dot{U(t)}.$ Suppose $V(t)= \operatorname{Im} U(t)$ then $\operatorname{Im}\big(U(t) +zI \big) = V(t) + yI.$ Now
\begin{align*}
F_{(U(t) +zI)} ( \dot{U(t)}) & =  \| \Big(V(t) + yI\Big)^{-\frac{1}{2}} \dot{U(t)} \Big(V(t) + yI\Big)^{-\frac{1}{2}}\| \\ & = \| \Big(V(t) + yI \Big)^{-\frac{1}{2}} V(t) ^{\frac{1}{2}}V(t)^{-\frac{1}{2}}\dot{U(t)} V(t)^{-\frac{1}{2}}V(t)^{\frac{1}{2}} \Big(V(t) + yI \Big)^{-\frac{1}{2}}\|\\ & \leq \| (V(t) + yI)^{-\frac{1}{2}} V(t)^{\frac{1}{2}} \| \| V(t)^{-\frac{1}{2}} \dot{U(t)} V(t)^{-\frac{1}{2}} \|  \| V(t)^{\frac{1}{2}} ( V(t) + yI)^{-\frac{1}{2}} \| \end{align*}
Since $ V(t)^{\frac{1}{2}}, ( V(t) + yI)^{-\frac{1}{2}}$ are symmetric $$\| (V(t) + yI)^{-\frac{1}{2}} V(t)^{\frac{1}{2}} \| = \| V(t)^{\frac{1}{2}} ( V(t) + yI)^{-\frac{1}{2}} \|.$$
It follows that 
\begin{equation} \label{eq 4.23}  F_{(U(t) +zI)} ( \dot{U(t)}) \leq \| (V(t) + yI)^{-\frac{1}{2}} V(t)^{\frac{1}{2}} \|^2 \| V(t)^{-\frac{1}{2}} \dot{U(t)} V(t)^{-\frac{1}{2}} \|  .\end{equation}
Next we show that $ \| (V(t) + yI)^{-\frac{1}{2}} V(t)^{\frac{1}{2}} \|^2 \leq \frac{1}{1+y}.$
Since $U_j = JW_j$, $U(t)$ must be of the form $JW(t)$ where $W(t)$ is a path between $W_1$ and $W_2.$ Again since $ W_j = T_- (0, z)Z_j$, $W$ is of the form $  W = T_- (0, z)Z $ where $Z(t)= X(t)+iY(t)$ is a path between $Z_1$ and $Z_2.$ So \begin{align*}
W(t) & =T_- (0, z)Z(t) \\ & = \begin{pmatrix} z I -B(0) &  I \\ - I  & 0 \end{pmatrix} Z(t) \\ & = zI -B(0)- Z(t)^{-1} \\ & = zI -B(0)- \overline{Z(t)} |Z(t)|^{-1},\quad |Z| = X^2 +Y^2  .\end{align*}
Thus, $ \operatorname{Im} W(t) = yI + Y(X^2+Y^2)^{-1} > yI .$
 Taking the inverse and applying operator theory, we get $$ ( \operatorname{Im} W(t))^{-1}< \frac{1}{y}I.$$ Since $U(t) = JW(t)= -W(t)^{-1}, $ $$ U(t) = -W(t)^{-1} = - \overline{W(t)} |W(t)|^{-1} = - \overline{W(t)} \Big(  ( \operatorname{Re} W(t))^2 + (\operatorname{Im} W(t))^2\Big)^{-1}.$$ Therefore, $ \operatorname{Im} U(t) =  \operatorname{Im} W(t) \Big(  ( \operatorname{Re} W(t))^2 + (\operatorname{Im} W(t))^2\Big)^{-1}.$  Since $ \operatorname{Im} W(t) \leq \Big( \operatorname{Re} W(t))^2 + (\operatorname{Im} W(t))^2\Big)\operatorname{Im} W(t)^{-1},$ and taking the inverse we obtain,
$$ \operatorname{Im} W(t) \Big(  ( \operatorname{Re} W(t))^2 + (\operatorname{Im} W(t))^2\Big)^{-1}  \leq \operatorname{Im} W(t)^{-1} < \frac{1}{y}I. $$ This implies that
$ \operatorname{Im} U(t)< \frac{1}{y} I $, that is $ V(t) < \frac{1}{y} I$.
By Lemma 3.7 from \cite{KM1}, we get  $$ \| (V+yI)^{- \frac{1}{2}} V^{\frac{1}{2}}\| < \frac{1}{1+y^2}.$$
Eqn \eqref{eq 4.23} becomes 

\begin{align}\label{2.28}
F_{(U(t) +zI)} ( \dot{U(t)}) & \leq \| (V(t) + yI)^{-\frac{1}{2}} V(t)^{\frac{1}{2}} \|^2 \| V(t)^{-\frac{1}{2}} \dot{U(t)} V(t)^{-\frac{1}{2}} \| & \leq \frac{1}{1+y^2} F_{(U(t))} ( \dot{U(t)}) .\end{align}
On integration  Eqn \eqref{2.28} becomes
$$ \int_0^1 F_{(U(t)+z)} ( \dot{U(t)}) dt \leq  \frac{1}{1+y^2} \int_0^1 F_{(U(t))} ( \dot{U(t)}) dt
.$$ Taking the infimum over all such paths $U(t) $ we get

\begin{align} \label {2.29}d_{\infty} \Big(T_{z} U_1, T_{z}  U_2 \Big) \leq \frac{1}{(1+ y^2) } d_{\infty}( U_1, U_2). \end{align}
Since $J \in SP(2d., \R), $  \begin{align} \label{2.30} d_{\infty}( U_1, U_2) = d_{\infty}( T_JW_1, T_JW_2) = d_{\infty}( W_1, W_2) .\end{align}
Using \eqref{2.29} and \eqref{2.30} in \eqref{2.271} we get the desired result
\begin{align*}  d_{\infty} \Big(T_{-}(n,z) W_1, T_{-} (n,z) W_2 \Big) \leq \frac{1}{(1+ y^2) } d_{\infty}( W_1, W_2). \end{align*}
\end{proof}

\begin{lemma} \label{lemma 4.11 } Let K be a compact subset of $\C^+.$ Then for any $W\in S_d,$
	 $$ \lim_{n\rightarrow \infty}	d_{\infty}(\tilde M_-(n, z),  P_-(n, z)W) = 0  $$  uniformly in $z\in K .$\end{lemma}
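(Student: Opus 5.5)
The natural route is to write both quantities as images of points of $\mathcal S_d$ under the same product $P_-(n,z)$ and then use the contraction property \eqref{di}. By \eqref{eq 4.14}, $\tilde M_-(n,z)=P_-(n,z)\tilde M_-(1,z)$, so
\[
d_\infty\big(\tilde M_-(n,z),P_-(n,z)W\big)=d_\infty\big(P_-(n,z)\tilde M_-(1,z),\,P_-(n,z)W\big).
\]
Here $\tilde M_-(1,z)\in\mathcal S_d$, since it is a matrix Herglotz function (this follows from the symmetric and positive imaginary part properties established for the half-line $m$ functions). Each factor $T_-(k,z)$ maps $\mathcal S_d$ into itself: it equals $T_BT_zT_J$, where $T_B$ and $T_J$ lie in $SL(2d,\R)$ and $T_z$ is translation by $zI$. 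So all intermediate points $T_-(k,z)\cdots T_-(2,z)W$ remain in $\mathcal S_d$, and \eqref{di} may be applied repeatedly.

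Iterating \eqref{di} $n-1$ times gives
\[
d_\infty\big(\tilde M_-(n,z),P_-(n,z)W\big)\le \Big(\frac{1}{1+y^2}\Big)^{n-1} d_\infty\big(\tilde M_-(1,z),W\big),\qquad y=\operatorname{Im}z.
\]
For uniformity on the compact set $K\subset\C^+$, first set $y_0=\min_{z\in K}\operatorname{Im}z>0$. The contraction factor is then at most $q=(1+y_0^2)^{-1}<1$ uniformly on $K$.

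Second, I need $\sup_{z\in K}d_\infty(\tilde M_-(1,z),W)<\infty$. Under the normalization $F_-(0)=0$ with $U,V$ normalized at $n$, $\tilde M_-(1,z)=F_-(2,z)F_-(1,z)^{-1}$ is an explicit rational expression in $z$ and $B(1)$, essentially $zI-B(1)$ up to the sign conventions in \eqref{tm}. It is therefore continuous on $K$, and its imaginary part is bounded below by $y_0 I$ there. The Finsler distance $d_\infty$ is continuous on $\mathcal S_d\times\mathcal S_d$, since locally it is comparable to Euclidean distance when the imaginary parts are bounded below. Hence $z\mapsto d_\infty(\tilde M_-(1,z),W)$ is continuous on $K$ and bounded by some constant $C_K$. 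This yields the bound $C_Kq^{n-1}\to0$ uniformly on $K$.

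The main obstacle is the bookkeeping in this boundedness step and in the claim that the iteration stays in $\mathcal S_d$. I have to check that the sign conventions in \eqref{mf} and \eqref{tm} really put $\tilde M_-(1,z)$ in $\mathcal S_d$, rather than its negative or conjugate. If the sign turned out wrong, I would replace $\tilde M_-$ by $M_-(n,z)=B(n)-zI+\tilde M_-(n,z)$, or compose with the real symplectic map $\operatorname{diag}(I,-I)$, and use Lemma \ref{lemma4.9} to transfer the statement. The continuity of $d_\infty$ I would justify by bounding it above by the length of the straight-line path, $\int_0^1\|Y(t)^{-1/2}(Z_2-Z_1)Y(t)^{-1/2}\|\,dt\le y_0^{-1}\|Z_2-Z_1\|$, where $Y(t)$ is the imaginary part of the convex combination.
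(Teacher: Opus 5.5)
Your proposal is correct and follows the paper's argument: write $\tilde M_-(n,z)=P_-(n,z)\tilde M_-(1,z)$ and iterate \eqref{di}. Your choice $y_0=\min_K\operatorname{Im}z$ and your bound on $d_\infty(\tilde M_-(1,z),W)$ over $K$ supply the uniformity in $z\in K$ that the paper's one-line proof leaves implicit, and your sign worry is moot, since the recurrence with $F_-(0)=0$ gives exactly $\tilde M_-(1,z)=zI-B(1)\in\mathcal S_d$.

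One refinement, inherited from the paper: the argument for \eqref{di} uses that $W_1,W_2$ are already images under some $T_-(\cdot,z)$, so that $\operatorname{Im}W_j>yI$. For general points the factor $(1+y^2)^{-1}$ fails, e.g.\ $d=1$, $W_1=i\eta$, $W_2=2i\eta$, $\eta\to0$, where the distance ratio tends to $1$. So at the innermost factor $T_-(2,z)$, which acts on your arbitrary $W$, you should claim only non-expansion. This gives $q^{n-2}$ instead of $q^{n-1}$ and leaves the conclusion unchanged.
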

 
 \begin{proof}  By writing $\tilde M_-(n, z) = P_{-}(n,z) \tilde M_-(1,z),$ it follows from \eqref{di}  that $$		d_{\infty}(\tilde M_-(n, z),  P_-(n, z)W) \leq  \frac{1}{(1+y^2)^{n-1}}	d_{\infty}( \tilde M_-(1, z),  W)  .$$ Thus by taking the limit as $n \rightarrow \infty $ we get the result. \end{proof}

\begin{lemma}  Then for any $M\in S_d,$
	 $$ -\overline{P_+(n,t)M} = P_-(n,t)(-T_+(1, t)\overline{M} )   .$$\end{lemma}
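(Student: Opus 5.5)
The plan is to read the identity as an equality of matrix-valued fractional linear transformations \eqref{flt}, and to deduce it from Lemma \ref{lemma4.9} together with two elementary facts. First, the action \eqref{flt} is compatible with matrix multiplication: $(ST)(Z)=S(T(Z))$ whenever both sides are defined. Second, complex conjugation commutes with the action of a real matrix.

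Write $D=\begin{pmatrix} I & 0\\ 0 & -I\end{pmatrix}$. By \eqref{flt} its action is $D(Z)=(IZ+0)(0\cdot Z-I)^{-1}=-Z$, so the map $Z\mapsto -Z$ is itself a fractional linear transformation. For real $t$, every factor $T_{+}(k,t)$ has real entries, because $B(k)$ is real symmetric. Hence $P_+(n,t)=\begin{pmatrix} A&B\\ C&D'\end{pmatrix}$ is a real matrix, and therefore
\[
\overline{P_+(n,t)M}=\overline{(AM+B)(CM+D')^{-1}}=(A\overline M+B)(C\overline M+D')^{-1}=P_+(n,t)\overline M .
\]

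The main computation is then the chain
\[
-\overline{P_+(n,t)M}=D\big(P_+(n,t)\overline M\big)=\big(DP_+(n,t)\big)(\overline M)=\big(P_-(n,t)\,D\,T_+(1,t)\big)(\overline M)=P_-(n,t)\Big(D\big(T_+(1,t)\overline M\big)\Big),
\]
where the third equality is Lemma \ref{lemma4.9}. The last expression equals $P_-(n,t)\big(-T_+(1,t)\overline M\big)$, which is the claim. The only algebraic input beyond Lemma \ref{lemma4.9} is the composition rule. I would verify it directly: setting $W=T(Z)=(A_2Z+B_2)(C_2Z+D_2)^{-1}$, one multiplies the numerator and denominator of $S(W)$ on the right by $(C_2Z+D_2)$ and recovers the blocks of $ST$.

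The step I expect to need the most care is well-definedness, namely that every inverse $(CZ+D)^{-1}$ appearing along the chain exists. Since $M\in\mathcal S_d$, the matrix $\overline M$ lies in the lower Siegel half space $-\mathcal S_d$. This is the image of $\mathcal S_d$ under entrywise conjugation, equivalently the set of symmetric matrices with negative definite imaginary part. All matrices involved, namely $T_+(k,t)$, $P_\pm(n,t)$ and $D$, are real, and by the symplectic identity of the preceding lemma they are real symplectic (up to the sign $D$). Real symplectic matrices act on $\mathcal S_d$ (see \eqref{dp2} and \cite{KM1}), and conjugating that action shows they act on $-\mathcal S_d$ as well. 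So each intermediate matrix stays in $\pm\mathcal S_d$, where the denominators are invertible, because $CZ+D$ with $\operatorname{Im}Z$ definite has trivial kernel by the usual argument $x^*(CZ+D)x=0\Rightarrow x=0$. The map $D$ simply swaps $\mathcal S_d$ and $-\mathcal S_d$. With invertibility secured at each stage, the chain above is legitimate.
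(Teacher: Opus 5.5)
Your proof is correct and follows the paper's route: the paper's proof is just ``it follows from Lemma \ref{lemma4.9},'' and you supply the steps it leaves implicit. These are that $\operatorname{diag}(I,-I)$ acts as $Z\mapsto -Z$, that conjugation commutes with the action of the real matrix $P_+(n,t)$ for real $t$, and that the fractional linear action respects matrix products. One small slip in your well-definedness remark: $x^*(CZ+D)x=0$ does not by itself force $x=0$; the correct argument uses the symplectic identity to get $(CZ+D)^*(AZ+B)-(AZ+B)^*(CZ+D)=2i\operatorname{Im}Z$, or more simply you can note that $T_\pm(k,t)$ act by $Z\mapsto \mp(tI-B(k))-Z^{-1}$, which only requires $Z$ to be invertible, as it is on $\pm\mathcal S_d$.
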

 \begin{proof} It follows from Lemma \ref{lemma4.9}.  \end{proof}

\begin{proof}[Proof of Theorem \ref{theorem 4.8}] Let $ A \subset \Sigma_{ac}^d$ with $|A| < \infty$ and let $\epsilon >0$ be given. Partition   $A = A_0\cup A_1 \cup \dots \cup A_N$ of disjoint 
 subsets such that $|A_0| <\epsilon,  $ and $ A_j$ is bounded for $j\geq 1.$ We require that $M_+(t) = \lim_{y \rightarrow 0^+} M_+(t+iy)$ exists and $M_+(t) \in \mathcal S_d$ on $ \bigcup_{j=1}^{N} A_j .$ To find $A_j$ with these properties, first of all put $t \in A$ for which $M_+(t)$ does not exists or does not lie in $\mathcal S_d $ into $A_0.$ Clearly, $|A_0|\leq \epsilon.$ Then pick a sufficiently large compact subset $S \subset \mathcal S_d$ that contains the range of $M_+= M_+(0,t)$, and a compact subset $ t\in K \subset \R .$ Subdivide $S$ into finitely many disjoint subsets   $ S_1, S_2,  \dots ,S_n$  such that for all $j=1, \dots, n, $ we have that, for any $ Z_1, Z_2 \in S_j,$   $$ d_{\infty}(Z_1, Z_2) \leq \epsilon .$$ Then define $A_j = (A \setminus A_0)\cap \tilde M_+^{-1}(S_j)$ and $ M^j = \tilde M_+(t_j) $ for any fixed $ t_j \in A_j. $ Note that for each $j$, $\tilde M_+(t_j)$ is invertible. These $ M^j $ satisfy the  inequality:  \begin{equation} \label{eq 4.16} d_{\infty}(\tilde M_+(t), \ M^j) \leq \epsilon \end{equation} for all $t \in A_j,\quad j\geq 1 .$
 
 By Lemma \ref{lemma 4.6}, for any $c\in \mathbb B(0),$ there is a number $y>0$ such that, for arbitrary matrix-valued Herglotz function $M \in \mathbb H$, for any Borel subset $S\subset \R$ and for all $j =1,2, \dots, n$ we have the following estimate,
 
 \begin{equation} \label{eq 4.22}  \Big | \int_{A_j} \omega_{c^*  M (N,t+iy)c} (-S) dt - \int_{A_j} \omega_{c^*  M (N,t)c}(-S) dt\  \Big | \leq \epsilon |A_j| .\end{equation}
We can define $y$ for each value of $j$ and $c\in \mathbb B(0)$; so $y$ is a function of $j $ and $c$, $ y =y(j,c)$. However, by taking the infimum over the sets $ \{1,2, \dots, n\}$ and $\mathbb B(0)$, we can assume $y$ to be independent of $j$ and $c$.

Since $P_+(n,t) \in  \operatorname{Aut}(\mathcal S_d)$ we obtain from \eqref{eq 4.14} and \eqref{eq 4.16}
that
 \begin{equation} \label{eq 4.18} d_{\infty}(\tilde M_+(n,t), \ P_+(n,t) M^j) \leq \epsilon \end{equation}
 for all $t \in A_j,\quad  1\leq j \leq N .$

Using \eqref{eq 4.11} and integrating show that for arbitrary Borel set $S \subset \R$ and for any $c \in \mathbb B(0)$
  \begin{align*}  \Big | \int_{A_j} \omega_{c^* \tilde M_+(n,t)c} (S) \ dt -  & \int_{A_j} \omega_{c^*  P_+(n,t)M^jc}(S) \ dt\  \Big | \\   & \leq  \int_{A_j}  \Big | \omega_{c^* \tilde M_+(N,t)c} (S) - \omega_{c^*  P_+(n,t)M^jc}(S) \Big|\ dt \\ & \leq  \gamma ( c^* \tilde M_+(n,t)c, \ c^*P_+(n,t) M^jc ) \ |A_j| \\ & \leq d_{\infty}(\tilde M_+(n,t), \ P_+(n,t) M^j) \ |A_j|. \end{align*}
  Thus by using \eqref {eq 4.18}
   \begin{align} \label{ineq 1}  \Big | \int_{A_j} \omega_{c^* \tilde M_+(N,t)c} (S) \ dt -  & \int_{A_j} \omega_{c^*  P_+(n,t)M^jc}(S) \ dt \Big |  \leq \epsilon |A_j|. \end{align}
 Now using   \eqref{hmp}, we can obtain 
 \begin{equation} \label{omega} \omega_{c^*P_+(n,t)M^jc}(S) = \omega_{- \overline{c^*P_+(n,t)M^jc}}(-S).\end{equation}
 Let $W^j = T_+(1, t)(-M^j).$ Then $W^j \in \mathcal S_d .$  
By Lemma \ref{lemma 4.11 } there exists $n_0 \in \N $ such that
\begin{equation} \label{eq 4.26}  d_{\infty}(\tilde M_-(n,t+iy), \ P_-(n,t+iy)( T_+(1,z)(- M^j ) \leq \epsilon \end{equation}
 for all $ n\geq n_0$, $t \in A_j,\quad  1\leq j \leq N .$
Similarly, again by Lemma \ref{lemma 4.6} and on integration over $A_j$, we estimate
 \begin{equation} \label{eq 4.28} \Big | \int_{A_j} \omega_{c^* P_-(n, t+iy) W^jc}(-S) dt - \int_{A_j} \omega_{c^* \tilde M_-(n, t+iy)c}(-S) dt\  \Big | \leq \epsilon |A_j| ,\end{equation} for any $c \in \mathbb B(0), n\geq n_0.$
Now by triangle inequality we get,
\begin{align*}   &\Big | \int_{A_j} \omega_{c^* P_-(n, t) W^jc}(-S) dt -  \int_{A_j} \omega_{c^* \tilde M_-(n, t)c}(-S) dt\  \Big | 
\\ & \leq  \Big | \int_{A_j} \omega_{c^* P_-(n,t) W^j c}(-S) dt - \int_{A_j} \omega_{c^* \tilde P_-(n, t+iy)W^jc}(-S) dt \Big | \\  & +\Big |\int_{A_j} \omega_{c^* \tilde P_-(n, t+iy)W^jc}(-S) dt - \int_{A_j} \omega_{c^*  \tilde M_-(n, t+iy)c}(-S) dt  \Big | \\ & + \Big |
\int_{A_j} \omega_{c^*  \tilde M_-(n, t+iy)c}(-S) dt  -
\int_{A_j} \omega_{c^* \tilde M_-(n, t)c}(-S) dt\  \Big | .\end{align*} 
Again by  \eqref{eq 4.22}   and \eqref{eq 4.28}, for any $c \in \mathbb B(0),  n\geq n_0$
\begin{align} \label{ineq2}  &\Big | \int_{A_j} \omega_{c^* P_-(n,t) W^j c}(-S) dt -  \int_{A_j} \omega_{c^* \tilde M_-(n,t)c}(-S) dt\  \Big | \leq 3\epsilon |A_j| . 
\end{align} 
  Using $ P_-(n,t) W^j = - \overline {P_+(n,t)M^j}$, for all for all $c\in \mathbb B(0)$ if $n\geq n_0 $ we get
 
  \begin{align*} 
  &\Big | \int_{A_j} \omega_{c^* \tilde M_-(N,t)c} (-S) dt   -\int_{A_j} \omega_{c^* \tilde M_+(N,t)c}(S) dt\ \Big | \\ 
  & \leq 
  \Big | \int_{A_j} \omega_{c^* \tilde M_-(N,t)c} (-S) dt  - \int_{A_j} \omega_{c^* P_-(n,t) W^j c}(-S) dt \Big |\\ & + \Big | \int_{A_j} \omega_{-c^*  \overline {P_+(n,t)M^j }c }(-S) dt - \int_{A_j} \omega_{c^* \tilde M_+(N,t)c}(S) dt\ \Big |
  .\end{align*}  
 By using \eqref{omega} and   combining this with \eqref{ineq2}, \eqref{eq 4.26}, \eqref{eq 4.28} we have,
 
 \begin{align*} \Big | \int_{A_j} \omega_{c^* \tilde M_-(N,t)c} (-S) dt   -\int_{A_j} \omega_{c^* \tilde M_+(N,t)c}(S) dt\ \Big |  
   \leq 4\epsilon |A_j| + \epsilon
 . \end{align*}  
 Now taking the sum over $j=1,2, \dots,N$ and using $|A_0 |\leq \epsilon $ we get 
  
  \begin{align*}
  \Big | \int_{A} \omega_{c^* \tilde M_-(N,t)c} (-S) dt   -\int_{A} \omega_{c^* \tilde M_+(N,t)c}(S) dt\ \Big |  
   \leq 4\epsilon |A| + 2\epsilon.
  \end{align*}  \end{proof}
\begin{proof}[Proof of Theorem \ref{RT} ] Let $ \mathbf A \in \omega (\mathbf B)  .$ Then there exists a sequence $n_j \rightarrow \infty $ such that $d(S^{n_j} \mathbf B, \  \mathbf A )  \rightarrow 0.$ By Proposition \ref{prop 4.1}  we get
\[ M_{\pm}(n_j, z) \rightarrow M_{\pm}^ \mathbf A (0,z) = M_{\pm}(z) , \] uniformly on compact subsets of $\C^+$. Then by Theorem \ref{thm 4.6}
\[ 
M_{\pm}(n_j, z) \rightarrow M_{\pm}, \ \  j \rightarrow \infty 
\]
in value distribution, that is, for any $ c\in \mathbb B(0)$ 
\begin{equation} 
\lim_{j \rightarrow \infty }  \int_A \omega_{c^*M_{\pm }(n_j, t)c}(S) dt = \int_A \omega_{c^* M_{\pm } c}(S) dt ,
\end{equation} 
for all Borel set $A, S \subset \R ,\ \ |A| < \infty .$ By Theorem \ref{theorem 4.8} we obtain, for any $A \subset \Sigma_{ac}, \ |A| < \infty $ and $S \subset \R,$ we have

\begin{equation*}
\lim_{j \rightarrow \infty}\Big ( \int_A \omega_{c^* \tilde M_-(n_j,t)c} (-S) dt -\int_A \omega_{c^* \tilde M_+(n_j,t)c}(S) dt\ \Big ) = 0,
\end{equation*} 
for all $c\in \mathbb B(0).$ Then,  it follows that for any $A \subset \Sigma_{ac}, \ |A| < \infty $ and $S \subset \R,$ we have

\begin{equation*}  \Big ( \int_A \omega_{c^*   M_-(t)c} (-S) dt -\int_A \omega_{c^*   M_+(t)c}(S) dt\ \Big ) = 0,
\end{equation*} for all $c\in \mathbb B(0).$
Now by Legesgue differentiation theorem, we get
$$
\omega_{c^*   M_-(t)c} (-S) =  \omega_{c^*   M_+(t)c}(S) , 
$$ for almost every $t\in \Sigma_{ac}$ and for for all $ c\in \mathbb B(0).$ Note that 
$$
\omega_{c^*   M_-(t)c} (-S) = \omega_{- \overline { c^*  M_-(t)c} } (S),
$$
for all $c\in \mathbb B(0).$ Thus for any Borel set $S$,  $ \omega_{c^*  M_+(t)c} (S) =  \omega_{- \overline { c^* M_-(t)c} } (S)$ for all $c\in \mathbb B(0).$ It follows that $ M_+(t) = -\overline {  M_-(t) } $ for almost every $t\in \Sigma_{ac}^d.$ Therefore,  $ \mathbf A \in \mathcal R ( \Sigma_{ac}^d) .$ This completes the proof.
\end{proof}

\textbf{Acknowledgement:} The author sincerely thanks anonymous reviewers for their valuable comments and constructive suggestions, which have greatly improved the article. 

  \providecommand{\bysame}{\leavevmode\hbox to3em{\hrulefill}\thinspace}
\providecommand{\MR}{\relax\ifhmode\unskip\space\fi MR }
\providecommand{\MRhref}[2]{%
  \href{http://www.ams.org/mathscinet-getitem?mr=#1}{#2}
}
\providecommand{\href}[2]{#2}

\end{document}